\providecommand{\U}[1]{\protect\rule{.1in}{.1in}}
\newtheorem{theorem}{Theorem}
\newtheorem{example}[theorem]{Example}
\newtheorem{lemma}[theorem]{Lemma}
\newtheorem{proposition}[theorem]{Proposition}
\newtheorem{remark}[theorem]{Remark}
\newenvironment{proof}[1][Proof]{\noindent\textbf{#1.} }{\ \rule{0.5em}{0.5em}}
\begin{document}

\title{Burkholder Theorem in Riesz spaces }
\author{Youssef Azouzi\\{\small Research Laboratory of Algebra, Topology, Arithmetic, and Order}\\{\small Faculty of Mathematical, Physical and Natural Sciences of Tunis}\\{\small Tunis-El Manar University, 2092-El Manar, Tunisia}\\Kawtar Ramdane\\Moroccan Polytechnic Research and Innovation Center (CMRPI)}
\date{}
\maketitle

\begin{abstract}
The main purpose of this paper is to give a vector lattice version of a
Theorem by Burkholder about convergence of martingales. The proof is based on
a vector lattice analogue of Austin's sample function theorem, proved recently
by Grobler, Labuschagne and Marraffa and on a new characterization of elements
of the sup-completion of a universally complete vector lattice which do not
belong to the space.

\end{abstract}

\section{Introduction}

This paper aims to generalize the following theorem due to Burkholder in the
setting of vector lattices. Our proof presented here follows Chen \cite{L-25}.

\begin{theorem}
\label{TB}Let $(\Omega,\mathcal{F},P)$ be a probability space. Consider two
martingales $f=\left(  f_{n}\right)  _{n\geq1}$ and $g=\left(  g_{n}\right)
_{n\geq1}$ with respect to the same filtration $(\mathcal{F}_{n})_{n\geq1}$
such that $||f||_{1}:=\sup\left\Vert f_{n}\right\Vert _{1}<\infty$
\textit{and} $S_{n}(g)\leq S_{n}(f)$ \textit{for} $n=1,2,\ \ldots$.
\textit{Then} $g$ \textit{converges} almost surely.
\end{theorem}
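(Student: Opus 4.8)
The quantities $S_n(f)$ and $S_n(g)$ are the martingale square functions, $S_n(f)=\big(\sum_{k=1}^{n} d_k(f)^2\big)^{1/2}$ with $d_k(f)=f_k-f_{k-1}$ and $f_0=0$, and likewise for $g$. The plan is to route the whole argument through these square functions rather than through the martingales themselves, because $g$ is controlled only by the domination hypothesis and need not be $L^1$-bounded, so Doob's convergence theorem does not apply to $g$ directly. The engine is Austin's sample function theorem, used in both of its directions. First I would exploit $\|f\|_1=\sup_n\|f_n\|_1<\infty$: since $f$ is bounded in $L^1$, Austin's theorem yields that its square function is finite almost surely, i.e. $S(f):=\sup_n S_n(f)=\big(\sum_k d_k(f)^2\big)^{1/2}<\infty$ $P$-a.e.

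Next I would pass the domination to the limit. Because $S_n$ is nondecreasing in $n$, the inequality $S_n(g)\le S_n(f)$ gives $S(g)=\sup_n S_n(g)\le S(f)<\infty$ almost everywhere, so the square function of $g$ is finite a.e. even though nothing is known about the $L^1$-norms of the $g_n$. The proof is then closed by the converse sample function property: a martingale converges almost surely on the set $\{S(g)<\infty\}$, and since that set is of full measure, $g$ converges $P$-a.e., which is the assertion.

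Finally, a word on how this classical scheme is the one the paper lifts to a universally complete vector lattice $E$ carrying a filtration of conditional expectations, and where the difficulty migrates. Order convergence replaces a.e.\ convergence, the square function is formed using the $f$-algebra multiplication of the universal completion, and the phrase \emph{finite almost everywhere} is rendered as \emph{the supremum $\sup_n S_n(f)$ belongs to $E$ rather than merely to the sup-completion $E^s$}. The vector-lattice Austin theorem of Grobler, Labuschagne and Marraffa supplies $S(f)\in E$, and, since membership in $E$ is inherited by smaller positive elements of $E^s$, the domination $0\le S(g)\le S(f)\in E$ gives $S(g)\in E$ for free. The main obstacle is the converse direction, $S(g)\in E\Rightarrow g$ converges: classically one localizes with the stopping times $T_a=\inf\{n:S_{n+1}(g)>a\}$, obtains convergence on each $\{S(g)\le a\}$ by $L^2$ arguments, and lets $a\to\infty$. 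In the lattice this becomes a band-projection argument, and the passage from the bands corresponding to $\{S(g)\le a\}$ to the full space is exactly where the new characterization of the elements of $E^s$ that do not lie in $E$ is needed.
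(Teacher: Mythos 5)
Your first two steps are fine: Austin's theorem does give $S(f)<\infty$ a.e.\ from $\sup_n\|f_n\|_1<\infty$, and the domination passes to the supremum. The proof breaks at the third step. The ``converse sample function property'' you invoke --- that an arbitrary martingale converges a.e.\ on the set $\{S(g)<\infty\}$ --- is false. Take independent increments $d_n$ with $P(d_n=n)=n^{-2}$ and $d_n=-c_n$ otherwise, where $c_n=n^{-1}(1-n^{-2})^{-1}$ is chosen so that $E[d_n]=0$. By Borel--Cantelli, almost surely $d_n=-c_n\sim-1/n$ eventually, so $\sum_n d_n^2<\infty$ a.e.\ (i.e.\ $S(g)<\infty$ a.e.), while $g_n=\sum_{k\le n}d_k\to-\infty$ a.e. The localization you sketch cannot repair this: $\inf\{n:S_{n+1}(g)>a\}$ is not a stopping time (the event $\{S_{n+1}(g)>a\}$ is only $\mathcal{F}_{n+1}$-measurable), and with the genuine stopping time $\tau_a=\inf\{n:S_n(g)>a\}$ the stopped square function is bounded only by $(a^2+d_{\tau_a}^2)^{1/2}$; the overshoot $d_{\tau_a}$ is not integrable in general, so no $L^2$ bound follows. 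For the same reason, the lattice strategy you describe at the end --- get $S(g)\in E$ for free from $0\le S(g)\le S(f)$ and then prove ``$S(g)\in E\Rightarrow g$ converges'' --- is not a viable route.

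What the hypothesis $\|f\|_1<\infty$ really buys, and what Chen's argument (followed by the paper) actually uses, is control of the jumps of the \emph{stopped} process, not merely finiteness of $S(f)$. The stopping time is built from $f$, not from $S(g)$: $P_n$ is the band projection generated by $(f_n-\lambda e)^+\vee(S_n(f)-\lambda e)^+$, so that before stopping both $|f|$ and $S(f)$ stay below $\lambda e$, and at the stopping instant the domination $(\Delta g_n^{P})^2\le (S_n^{P}(f))^2$ bounds the last jump of $g^{P}$ by $2\lambda e+P_n|\Delta f_n^{P}|$, whose $T$-expectation is controlled by $N_1(f)$ via optional stopping applied to the submartingale $(|f_n|)$ (Lemma \ref{L8}). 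Convergence of $g^{P}$ is then obtained from the weighted series $\sum U_n^{-1}\Delta g_n^{P}$ with $U_n=e+S^2_{n\wedge P}(g)$, the Abel--Dini bound of Proposition \ref{L2}, and a Kronecker-type step (Lemma \ref{L3}); finally $\lambda=k\to\infty$ is handled by Theorem \ref{L4} and Theorem \ref{ThK}. None of these ingredients is reached by your outline, because the second, essential use of the $L^1$-bound on $f$ is missing; as it stands, your argument would prove a statement that the counterexample above refutes.
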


Here $S_{n}\left(  h\right)  $ denotes the Quadratic Variation Process
associated to $h,$ that is, $S_{n}\left(  h\right)  =%
{\textstyle\sum\limits_{k=1}^{n}}
\left(  h_{k}-h_{k-1}\right)  ^{2}.$

In the framework of vector lattices we will exclusively consider Dedekind
complete vector lattices, so every bounded nonempty subset of $E$ has a
supremum and infimum. Comparing with the classical probability theory, one can
think of the space $L_{1}=L_{1}\left(  \Omega,\mathcal{F},\mathbb{P}\right)  $
consisting of all integrable real random variables. The space $E^{u},$ the
universal completion of $E,$ plays the same role as $L_{0}$ does in the
classical theory, while $E_{s},$ the sup-completion of $E$ corresponds to the
cone of random variables taking their values in $\mathbb{R}\cup\left\{
\infty\right\}  $ which are bounded below by some integrable variable. Recall
that $E^{u}$ is Dedekind complete and laterally complete, where the later
means that every subset of $E^{u}$ which consists of mutually disjoint
positive elements, has a supremum in $E^{u}.$ However, an arbitrary subset of
$E^{u}$ need not have a supremum. This lack of supremum could be made up by
considering the sup-completion $E_{s}$ of $E.$ This concept is due to Donner
in \cite{b-1665} and we refer to \cite{b-1665,L-444} for the definition and
properties of the sup-completion $E_{s}$. We just recall here that $E_{s}$ is
a unique Dedekind complete ordered cone that contains $E$ as a sub-cone of its
group of invertible elements and in which every nonempty subset has a
supremum. The role of a probability measure is played by a conditional
expectation operator, i.e., a strictly positive order continuous projection
$T$ mapping the vector lattice into itself onto a Dedekind complete Riesz
subspace with $Te=e$ form some weak order unit $e.$ A great part of this
theory has been developed by Kuo, Labuschagne and Watson (see for example
\cite{L-24,L-32,L-03}) for the discrete case and essentially by Grobler,
Labuschagne for the continuous case (see for example
\cite{L-10,L-06,L-21,L-12,L-13}). The central result of this work is to get a
generalization of Theorem \ref{TB}. Several nice results can be discovered in
the way to reach our purpose. We believe that it is the case here for Theorem
\ref{L4} and Theorem \ref{ThK}, which concern order structure and are totally
independent of the probability theory. Both of them are crucial steps to prove
our main result. The first result provides a characterization of elements in
$E_{s}$ that do not belong to $E,$ where $E$ is a universally complete vector
lattice. It states that a positive element $x$ in $E_{s}$ belongs to $E$ if
and only if $\inf\limits_{\lambda\in\left(  0,\infty\right)  }P_{\left(
x-\lambda e\right)  ^{+}}e=0,$ where $P_{u}$ denotes the order projection on
the band generated by $u.$ This condition is the abstract translation of the
fact that $\mathbb{P}\left(  \left\vert X\right\vert \geq\lambda\right)
\downarrow0$ as $\lambda\longrightarrow\infty,$ which means that the random
variable $X$ is finite. For more informations about the connection between
$E^{u}$ and $E_{s}$ we refer to \cite{L-444}. The second result, Theorem
\ref{ThK}, provides a sufficient condition for a sequence $\left(
x_{n}\right)  _{n\geq1}$ in $E$ to converge in order in the space $E^{u}.$
This condition says that it is enough to prove that for some sequence of band
projections $\left(  P_{k}\right)  $ satisfying $P_{k}\uparrow I,$ the
sequences $\left(  P_{k}x_{n}\right)  _{n\geq1},k=1,2,...$ converges in order.
In the proof of this result we use the notion of unbounded order convergence,
which characterizes the notion of almost everywhere convergence in the
classical theory, and has received increasing attention in recent years (see
for example \cite{L-65,L-719}). In order to prove some other intermediate
results we are led to use the notion of Riemann integral in vector lattices
introduced by the authors in \cite{L-311}. All these results are proved in
Section 2. In that section we collect all results we need to prove the main
theorem of this paper (Theorem \ref{Main}). Its proof is presented in Section 3.

As we have pointed out above, we will consider a Dedekind complete vector
lattice $E$ and we assume that $E$ has a weak order unit $e.$ This space will
be equipped with a conditional expectation operator $T$ such that $Te=e.$ This
distinguished unit $e$ plays the same role as the constant random variable
$\mathbf{1}$ does in the classical theory.$.$It was shown in \cite{L-24} that
there exists a largest Riesz subspace of $E^{u},$ called the \textit{natural}
\textit{domain }of $T$, and denoted by $L^{1}\left(  T\right)  ,$ to which $T$
extends uniquely to a conditional expectation which will denoted again by $T$.
It is worth noting that $L^{1}\left(  T\right)  $ is a Dedekind complete ideal
of $E^{u}.$ It is also $T$-universally complete, that is, for every increasing
net $\left(  x_{\alpha}\right)  $ in $L^{1}\left(  T\right)  $ such that
$\left(  Tx_{\alpha}\right)  _{\alpha}$ is bounded in $E^{u},$ the supremum
$\sup x_{\alpha}$ exists in $L^{1}\left(  T\right)  .$ In \cite{L-06} Grobler
introduced the notion of Daniell Integral in vector lattices and used it to
develop a kind of functional calculus. These tools has been considered to
study the spaces $\mathcal{L}^{p}(T)$ for $p\in\lbrack1,\infty)$ in
\cite{L-180} by the first author and Trabelsi. For $p\in\lbrack1,\infty)$ the
space $L^{p}\left(  T\right)  $ is equipped with the a vector valued norm
$\left\Vert {}\right\Vert _{p,T}$ defined by%
\[
\left\Vert f\right\Vert _{p,T}=\left(  T\left\vert f\right\vert ^{p}\right)
^{1/p},\qquad f\in L^{p}\left(  T\right)  .
\]
Notions of filtrations, martingales have been defined in the measure-free
framework of vector lattices by means of operators. All of them are needed to
give an abstract formulation of Burkholder Theorem. Let us recall some
definitions. A filtration in $E$ is a sequence of conditional expectation
operators $\left\{  T_{n}:n\geq1\right\}  $ with $T_{1}=T$ and $T_{i}%
T_{j}=T_{j}T_{i}=T_{\min\left(  i,j\right)  }$ for all $i,j\geq1.$%
(\cite[Definition 3.1]{L-32}). A martingale is a sequence $\left(
x_{n}\right)  _{n\geq1}$ of elements of $E$ such that $T_{n}x_{n+1}=x_{n}$ for
all $n\geq1.$ Consider now two martingales\textit{ }$(f_{n})_{n\geq1},$ and
$(g_{n})_{n\geq1}$ and define $\left\Vert f\right\Vert _{1}=\sup
\limits_{n}T\left\vert f_{n}\right\vert \in E_{s}$. Keeping the same notations
as in the concrete case, it turns out that $g$ converges in order in $E^{u}$
whenever $S_{n}\left(  g\right)  \leq S_{n}\left(  f\right)  $ for $n=1,2,...$
and $\left\Vert f\right\Vert _{1}$ exists in $E^{u}$. For the proof of this
result we need also the notions of stopping time and stopped process in the
setting of Riesz spaces. We recall from \cite{L-32} that a stopping time is an
increasing sequence $(P_{i})_{i\geq1}$ of band projections on $E$ such that

\begin{center}
$P_{i}T_{j}=T_{j}P_{i}$ whenever $1\leq i\leq j$.
\end{center}

Given a filtration $\left(  T_{n}\right)  ,$ an adapted process $f=\left(
f_{n}\right)  ,$ that is, $\left(  f_{n}\right)  $ is a sequence in $E$ with
$f_{n}\in R(T_{n})$, for all $n\in\mathbb{N}$, and a stopping time $P=\left(
P_{n}\right)  ,$ we can define the stopped process $(f_{p},T_{p})$ by putting
\[
f_{P}=\sum\limits_{i=1}^{\infty}(P_{i}-P_{i-1})f_{i}.
\]
One of the key ideas in the proof of Burkholder Theorem is a vector lattice
version of Austin Theorem, which is the main result of \cite{L-21}. For more
informations about Riesz spaces we refer to \cite{b-240,Z1997} and for
probability theory we refer to \cite{b-23} and \cite{CT1988}.

\section{Several tools}

This section contains all results needed to prove our main Theorem (Theorem
\ref{Main}). We start be giving a brief review of unbounded order convergence.
This kind of convergence is a satisfactory abstraction of almost everywhere
convergence A net $\left(  x_{\alpha}\right)  _{\alpha\in A}$ in a vector
lattice $E$ is said to be \textit{order convergent} to $x,$ and we write
$x_{\alpha}\overset{o}{\longrightarrow}x$, if there exists a net $\left(
y_{\beta}\right)  _{\beta\in B}$ such that $y_{\beta}\downarrow0$ and for each
$\beta\in B$ there exists $\alpha_{\beta}\in A$ satisfying $\left\vert
x_{\alpha}-x\right\vert \leq y_{\beta}$ for all $\alpha\geq\alpha_{\beta}.$ A
net $\left(  x_{\alpha}\right)  $ is said to \textit{converge in unbounded
order} to $x,$ and we write $x_{\alpha}\overset{uo}{\longrightarrow}x,$ if
$\left\vert x_{\alpha}-x\right\vert \wedge y\overset{o}{\longrightarrow}0$ for
every $y\in E_{+}.$ If $E$ has a weak unit $e$ this can be reduced to
$\left\vert x_{\alpha}-x\right\vert \wedge e\overset{o}{\longrightarrow}%
0.$(see Theorem 2.2. in \cite{L-63}) It should be noted that these
convergences agree for sequences if the space is universally complete
\cite[Corollary 3.12]{L-65}. For more informations about $uo$-convergence the
reader is referred to papers \cite{L-63} and \cite{L-65}. It is sometimes
convenient to use series instead of sequences. The notation $%
{\textstyle\sum\limits_{k=1}^{\infty}}
x_{k}$ means the order limit of the sequence $S_{n}=%
{\textstyle\sum\limits_{k=1}^{n}}
x_{k}.$ Our first important result in this section provides a sufficient
condition for a sequence $\left(  x_{n}\right)  $ in $E$ to be order
convergent in $E^{u}.$ Let us record a lemma which will be of great use for
us; see e.g., \cite[Corollary 32.4]{Z1997}.

\begin{lemma}
\label{L6}Let $E$ be a vector lattice. If the projection bands $B_{1}%
,B_{2},B_{3},B_{4}$ in $E$ with corresponding band projections $P_{1}%
,P_{2},P_{3},P_{4}$ satisfy $B_{4}\subset B_{3}\subset B_{2}\subset B_{1}$
then $\left(  P_{1}-P_{2}\right)  \perp\left(  P_{3}-P_{4}\right)  $ and so
$\left(  P_{1}-P_{2}\right)  u\wedge\left(  P_{3}-P_{4}\right)  u=0$ for every
$u\in E^{+}.$
\end{lemma}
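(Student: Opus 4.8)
The plan is to reduce the whole statement to the single algebraic identity $(P_1-P_2)(P_3-P_4)=0$ and then read off the lattice-theoretic disjointness from it. First I would record the standard fact that for nested projection bands the associated band projections are comparable and commute: since $B_4\subset B_3\subset B_2\subset B_1$ one has $P_4\leq P_3\leq P_2\leq P_1$ together with $P_iP_j=P_jP_i=P_{\min(i,j)}$ for all $i,j$. Next I would observe that $Q:=P_1-P_2$ and $R:=P_3-P_4$ are themselves band projections. This is a routine idempotency check; for instance $Q^2=P_1-P_1P_2-P_2P_1+P_2=P_1-P_2=Q$ using $P_1P_2=P_2P_1=P_2$, and $0\leq Q\leq I$, so $Q$ is the band projection onto $B_1\cap B_2^{d}$, and similarly $R$ is the band projection onto $B_3\cap B_4^{d}$.

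The computational heart is then very short. Expanding and substituting $P_iP_j=P_{\min(i,j)}$ gives
\[
(P_1-P_2)(P_3-P_4)=P_1P_3-P_1P_4-P_2P_3+P_2P_4=P_3-P_4-P_3+P_4=0 .
\]
Thus $QR=RQ=0$, which is exactly the operator-level assertion $(P_1-P_2)\perp(P_3-P_4)$ and says precisely that the range bands $B_1\cap B_2^{d}$ and $B_3\cap B_4^{d}$ are disjoint. Conceptually this is transparent, since $B_3\cap B_4^{d}\subset B_3\subset B_2$ while $B_1\cap B_2^{d}\subset B_2^{d}$, and $B_2\perp B_2^{d}$.

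Finally I would deduce the pointwise meet identity. For fixed $u\in E^{+}$, the element $Qu$ lies in the band $B_1\cap B_2^{d}$ and $Ru$ lies in the disjoint band $B_3\cap B_4^{d}$, so $Qu$ and $Ru$ are disjoint; being positive, this forces $Qu\wedge Ru=0$, i.e. $(P_1-P_2)u\wedge(P_3-P_4)u=0$. There is no genuine obstacle here: the only points that need care are the justification that a difference of comparable band projections is again a band projection with the stated range, and the passage from $QR=0$ to disjointness of the positive elements $Qu$ and $Ru$, both of which follow from the elementary band-projection calculus recalled at the outset.
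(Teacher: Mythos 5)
Your proof is correct. The paper itself gives no argument for this lemma, citing only \cite[Corollary 32.4]{Z1997}; your computation $(P_1-P_2)(P_3-P_4)=P_3-P_4-P_3+P_4=0$ via $P_iP_j=P_{\min(i,j)}$, followed by the observation that $Qu$ and $Ru$ lie in the mutually disjoint bands $B_1\cap B_2^{d}$ and $B_3\cap B_4^{d}$, is exactly the standard band-projection calculus that the cited result encapsulates, and every step you flag as needing care is indeed covered by it.
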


\begin{theorem}
\label{ThK}Let $E$ be a Dedekind complete vector lattice and consider a
sequence of band projections $\left(  P_{k}\right)  _{k\geq1}$ such that
$P_{k}\uparrow I$ and a sequence $\left(  x_{n}\right)  _{n\geq1}$ of elements
of $E.$ If for each $k$ the sequence $\left(  P_{k}x_{n}\right)  _{n\geq1}$
converges in order in $E$, then $\left(  x_{n}\right)  _{n\geq1}$ converges in
order in $E^{u}.$
\end{theorem}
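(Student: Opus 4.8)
The plan is to first identify the candidate limit. For each fixed $k$ let $y^{(k)}\in E$ denote the order limit of $(P_k x_n)_{n\ge 1}$, which exists by hypothesis. Since band projections are order continuous and $P_k P_{k+1}=P_k$ (because $P_k\le P_{k+1}$), applying $P_k$ to the convergent sequence $(P_{k+1}x_n)_n$ gives the consistency relation $P_k y^{(k+1)}=o\text{-}\lim_n P_k P_{k+1} x_n = o\text{-}\lim_n P_k x_n = y^{(k)}$. Equivalently, writing $P_0=0$ and $y^{(0)}=0$, the differences $z_k:=(P_k-P_{k-1})y^{(k)} = y^{(k)}-y^{(k-1)}$ are supported on the mutually disjoint bands associated with $Q_k:=P_k-P_{k-1}$, disjointness being exactly the content of Lemma \ref{L6}.

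Next I would glue the $z_k$ into a single element of $E^{u}$. The positive parts $(z_k^{+})_k$ form a system of mutually disjoint positive elements, as do the negative parts $(z_k^{-})_k$; since $E^{u}$ is laterally complete, the suprema $p:=\sup_k z_k^{+}$ and $q:=\sup_k z_k^{-}$ exist in $E^{u}$, and $x:=p-q$ satisfies $Q_k x = z_k$ for every $k$ (using that band projections are order continuous and commute with the lattice operations). Telescoping then yields $P_k x = \sum_{j=1}^{k} Q_j x = \sum_{j=1}^{k}(y^{(j)}-y^{(j-1)}) = y^{(k)}$, so $x$ is the natural candidate: its restriction to each band agrees with the limit already produced there. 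Here one also uses that $P_k\uparrow I$ passes from $E$ to its extension on the order dense space $E^{u}$.

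It remains to prove $x_n \overset{o}{\longrightarrow} x$ in $E^{u}$. I would establish the weaker statement $x_n\overset{uo}{\longrightarrow}x$ first and then upgrade it. Fix $u\in E^{u}_{+}$ and set $w_n:=|x_n-x|\wedge u$. Decomposing along $P_k$ and using that band projections commute with the lattice operations gives $w_n \le |P_k(x_n-x)| + (I-P_k)u = |P_k x_n - y^{(k)}| + (I-P_k)u$. Because $E^{u}$ is Dedekind complete and $0\le w_n\le u$, the element $w:=\inf_{N}\sup_{n\ge N} w_n$ exists, and taking $\sup_{n\ge N}$ then $\inf_{N}$ in the displayed inequality yields $w\le (I-P_k)u$ for every $k$, the first summand contributing $0$ since $P_k x_n\to y^{(k)}$ in order in $E$. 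Letting $k\to\infty$ and using $(I-P_k)u\downarrow 0$ forces $w=0$, i.e. $w_n\overset{o}{\longrightarrow}0$; as $u$ was arbitrary this is precisely $x_n\overset{uo}{\longrightarrow}x$.

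Finally, since $E^{u}$ is universally complete, unbounded order convergence and order convergence coincide for sequences by \cite[Corollary 3.12]{L-65}, so $x_n\overset{o}{\longrightarrow}x$ in $E^{u}$, as required. I expect the main obstacle to be the construction of the limit in the second paragraph: verifying that the disjoint pieces $z_k$ really assemble to an element of $E^{u}$ (rather than merely to a formal element of $E_s$) and that $P_k x = y^{(k)}$, which is where lateral completeness and Lemma \ref{L6} do the essential work. By comparison the convergence estimate is a routine localization once the candidate $x$ is in hand.
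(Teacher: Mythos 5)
Your proposal is correct and follows essentially the same route as the paper: identify the band-wise limits $y^{(k)}$, assemble them into an element of $E^{u}$ via lateral completeness applied to the disjoint pieces $(P_k-P_{k-1})y^{(k)}$, prove $uo$-convergence through the decomposition $|x_n-x|\wedge u\le P_k|x_n-x|+P_k^{d}u$, and upgrade to order convergence using \cite[Corollary 3.12]{L-65}. The only (cosmetic) difference is that the paper first reduces to $x_n\ge 0$ so that the $y^{(k)}$ increase, whereas you keep general signs and split the disjoint differences into positive and negative parts before taking suprema.
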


\begin{proof}
By considering the positive part and the negative part of $x_{n}$ we may
assume, without loss of generalities, that $x_{n}\in E^{+}$ for all $n.$ We
denote by $y_{k}$ the order limit of $\left(  P_{k}(x_{n})\right)  _{n\geq1}$.
The sequence $\left(  y_{k}\right)  _{k\geq1}$ is increasing because $\left(
P_{k}\right)  _{k\geq1}$ is so. Moreover, we have%
\[
P_{k-1}y_{k}=\lim\limits_{n}P_{k-1}P_{k}x_{n}=y_{k-1}.
\]
Since we have also $y_{k}=P_{k}y_{k}$ we obtain%
\[
y_{k}=%
{\textstyle\sum\limits_{i=1}^{k}}
\left(  P_{i}-P_{i-1}\right)  y_{i},
\]
\newline where $P_{0}=0.$ Now since $\left(  P_{i}-P_{i-1}\right)  $ is a
sequence of disjoint projections we see that%
\[
y=\sup y_{k}=%
{\textstyle\sum\limits_{i=1}^{\infty}}
\left(  P_{i}-P_{i-1}\right)  y_{i}%
\]
exists in $E^{u}.$ We claim now that $x_{n}\overset{o}{\longrightarrow}y$ in
$E^{u}$. As it was noted above, it is sufficient to prove that $\left(
x_{n}\right)  _{n\geq1}$ converges to $y$ in unbounded order. To this end let
$z\in E_{+}$ and observe that%
\begin{align*}
\left\vert x_{n}-y\right\vert \wedge z  &  =P_{k}(\left\vert x_{n}%
-y\right\vert \wedge z)+P_{k}^{d}(\left\vert x_{n}-x\right\vert \wedge z)\\
&  \leq P_{k}(\left\vert x_{n}-y\right\vert \wedge z)+P_{k}^{d}z,
\end{align*}
which yields%
\[
\limsup\limits_{n\longrightarrow\infty}\left\vert x_{n}-y\right\vert \wedge
z\leq P_{k}\left\vert y_{x}-y\right\vert +P_{k}^{d}z\leq\left\vert
y_{k}-y\right\vert +P_{k}^{d}z.
\]
\newline and the result follows since $P_{k}^{d}z\overset{o}{\longrightarrow
}0$ and $\left\vert y-y_{k}\right\vert \overset{o}{\longrightarrow}0$ as
$k\longrightarrow\infty.$
\end{proof}

\begin{remark}
\label{R2}Does the above theorem remain valid for nets? As order convergence
and unbounded order convergence do not agree for nets even in universally
complete vector lattices (see \cite[Example 6]{L-174}), it seems more
reasonable to check for a counterexample. However, a slight modification of
the above proof yields that if we take a net $\left(  x_{a}\right)  $ instead
of a sequence in Theorem \ref{ThK}, we can conclude that $\left(  x_{\alpha
}\right)  $ is $uo$-convergent in $E^{u}$ and its $uo$-limit $y$ satisfies
$y=\lim y_{k}=\sup y_{k},$ where $y_{k}$ is the order limit of $\left(
P_{k}x_{\alpha}\right)  _{\alpha\in A}.$ Now if we look for a more general
result we will take a net of band projections. In this case the construction
of the $uo$-limit of $\left(  x_{\alpha}\right)  $ is less obvious, but the
result remains true as will be shown in the next result.
\end{remark}

\begin{theorem}
\label{T2}Let $E$ be a Dedekind complete vector lattice and consider a net of
band projections $\left(  P_{\gamma}\right)  _{\gamma\in\Gamma}$ such that
$P_{\gamma}\uparrow I$ and a net $\left(  x_{\alpha}\right)  _{\alpha\in A}$
of elements in $E.$ If for each $\gamma$ the net $\left(  P_{\gamma}x_{\alpha
}\right)  _{\alpha\in A}$ is order convergent, then $\left(  x_{\alpha
}\right)  $ is $uo$-convergent in $E^{u}.$
\end{theorem}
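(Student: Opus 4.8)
The plan is to adapt the proof of Theorem \ref{ThK} to the net setting, following the strategy outlined in Remark \ref{R2}. The main structural difference is that the index set $\Gamma$ for the band projections is now an arbitrary directed set rather than $\mathbb{N}$, so the telescoping-sum construction of the limit $y$ via disjoint projections $P_i - P_{i-1}$ is no longer available. First I would set $y_\gamma := \operatorname{o-lim}_\alpha P_\gamma x_\alpha$, the order limit whose existence is hypothesized for each $\gamma$. As in the sequence case, reducing to $x_\alpha \in E^+$ via positive and negative parts, one checks that $(y_\gamma)_{\gamma \in \Gamma}$ is an increasing net: for $\gamma \leq \delta$ one has $P_\gamma P_\delta = P_\gamma$, so $P_\gamma y_\delta = \operatorname{o-lim}_\alpha P_\gamma P_\delta x_\alpha = y_\gamma$, and since $y_\gamma = P_\gamma y_\gamma \leq y_\delta$ the monotonicity follows.

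The key obstacle is constructing the candidate limit $y$. In the sequence case the disjointness of $(P_i - P_{i-1})$ guaranteed the supremum lives in $E^u$ via lateral completeness; here I must instead argue directly that $y := \sup_\gamma y_\gamma$ exists in $E^u$. Since $E^u$ is universally complete and the net $(y_\gamma)$ is increasing, it suffices to produce an upper bound, or better, to exhibit the supremum as a lateral supremum of disjoint pieces. The plan is to fix a maximal disjoint family realization: writing $Q_\gamma$ for a suitable refinement so that the increments become disjoint, one expresses $\sup_\gamma y_\gamma$ as a sum of mutually disjoint positive elements and invokes lateral completeness of $E^u$. Concretely, because $P_\gamma \uparrow I$ and each $y_\gamma = P_\gamma y_\gamma$, the increasing net $(y_\gamma)$ is order bounded on each band $P_\delta E$ (there it equals $y_\delta$), so its supremum exists band-by-band and assembles into an element $y \in E^u$ by lateral completeness.

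Once $y$ is in hand, I would verify $x_\alpha \overset{uo}{\longrightarrow} y$ by the same band-splitting estimate used in Theorem \ref{ThK}. For $z \in E_+$ and any fixed $\gamma$, decompose
\[
|x_\alpha - y| \wedge z = P_\gamma(|x_\alpha - y| \wedge z) + P_\gamma^d(|x_\alpha - y| \wedge z) \leq P_\gamma(|x_\alpha - y| \wedge z) + P_\gamma^d z.
\]
Taking the order limit superior over $\alpha$ and using $P_\gamma y = y_\gamma$ together with the order convergence $P_\gamma x_\alpha \to y_\gamma$, the first term is controlled by $|y_\gamma - y|$, giving
\[
\operatorname{o-limsup}_\alpha |x_\alpha - y| \wedge z \leq |y_\gamma - y| + P_\gamma^d z.
\]
Since $P_\gamma \uparrow I$ forces $P_\gamma^d z \downarrow 0$ and $y_\gamma \uparrow y$ forces $|y_\gamma - y| \downarrow 0$, letting $\gamma$ run through $\Gamma$ yields $|x_\alpha - y| \wedge z \overset{o}{\longrightarrow} 0$ for every $z \in E_+$, which is exactly $uo$-convergence to $y$.

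The hard part will be making the $\operatorname{o-limsup}$ manipulation rigorous for nets, since order limit superior of a net is subtler than for sequences; I expect to phrase the argument purely in terms of the defining dominating nets of order convergence to avoid appealing to a $\limsup$ that may not behave well. The existence and laterally-complete assembly of $y = \sup_\gamma y_\gamma$ is the other genuinely new ingredient relative to Theorem \ref{ThK}, and I would expect the cleanest route to be a direct verification that on each band $P_\gamma E$ the truncated net stabilizes, so no global order boundedness in $E$ is required and the supremum is taken in the universally complete $E^u$.
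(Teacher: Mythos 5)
Your strategy is genuinely different from the paper's. The paper explicitly declines to construct the limit $y=\sup_\gamma y_\gamma$ (remarking that "it is not obvious how to show that its supremum is still in $E^u$"); instead it shows that $(x_\alpha)$ is $uo$-Cauchy --- via essentially your band-splitting estimate applied to $|x_\alpha-x_\beta|\wedge z$ rather than to $|x_\alpha-y|\wedge z$ --- and then invokes the $uo$-completeness of universally complete spaces (\cite[Theorem 17]{L-444}). That route delivers the conclusion without ever producing a candidate limit; your route, if completed, buys the extra information that the $uo$-limit equals $\sup_\gamma y_\gamma$, which is exactly the refinement promised in Remark \ref{R2}. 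Your verification step is fine, and the $\limsup$ worry is a non-issue: since $0\le |x_\alpha-y|\wedge z\le z$, the net is order bounded, so $\limsup_\alpha$ is a well-defined infimum of suprema in the Dedekind complete space $E^u$ and the estimate closes as in Theorem \ref{ThK}.

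The one genuine gap is exactly where you locate it: the existence of $y=\sup_\gamma y_\gamma$ in $E^u$. As written, "its supremum exists band-by-band and assembles into an element of $E^u$ by lateral completeness" is not a proof, because lateral completeness applies only to families of pairwise \emph{disjoint} positive elements, and the bands $P_\delta E$ are nested, not disjoint; there is no canonical disjoint decomposition of $I$ subordinate to an arbitrary upward-directed net of band projections, which is precisely what fails when passing from the sequence case (where $P_i-P_{i-1}$ does the job). The maximal-disjoint-family idea you allude to is the correct repair, but it must actually be carried out: by Zorn's lemma choose a maximal family $\{R_i\}$ of pairwise disjoint nonzero band projections with $R_i\le P_{\gamma_i}$ for some $\gamma_i$; maximality together with $P_\gamma\uparrow I$ forces $\sup_i R_i=I$; the elements $R_i y_{\gamma_i}$ are pairwise disjoint and positive, so $y:=\sup_i R_i y_{\gamma_i}$ exists in $E^u$ by lateral completeness; and the coherence relation $P_\delta y_\gamma=y_\delta$ gives $R_i y_\gamma\le R_i y_{\gamma'}=R_iP_{\gamma_i}y_{\gamma'}=R_i y_{\gamma_i}$ for any $\gamma'\ge\gamma,\gamma_i$, whence $y_\gamma=\sup_i R_i y_\gamma\le y$ for every $\gamma$, while any upper bound of $(y_\gamma)$ dominates each $R_i y_{\gamma_i}$ and hence $y$. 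With that paragraph supplied your proof is complete; alternatively you can sidestep the construction entirely by proving the $uo$-Cauchy property as the paper does.
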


\begin{proof}
We denote $y_{\alpha}=\lim\limits_{\gamma}P_{\gamma}x_{\alpha}$ for each
$\alpha\in A.$ As in the proof of Theorem \ref{ThK} we can show that $\left(
y_{\alpha}\right)  _{\alpha\in A}$ is an increasing net, but it is not obvious
how to show that its supremum is still in $E^{u}$. We will prove instead that
the net $\left(  x_{\alpha}\right)  $ is $uo$-Cauchy and conclude then by
\cite[Theorem 17]{L-444}. To this end let $y\in E_{+}$ and observe that
\begin{align*}
\left\vert x_{\alpha}-x_{\beta}\right\vert \wedge y  &  =P_{\gamma}\left\vert
x_{\alpha}-x_{\beta}\right\vert \wedge y+P_{\gamma}^{d}\left\vert x_{\alpha
}-x_{\beta}\right\vert \wedge y\\
&  \leq P_{\gamma}\left(  \left\vert x_{\alpha}-y_{\gamma}\right\vert
+P_{\gamma}\left\vert x_{\beta}-y_{\gamma}\right\vert \right)  +P_{\gamma}%
^{d}y.
\end{align*}
It follows that $\limsup\limits_{\left(  \alpha,\beta\right)  }\left\vert
x_{\alpha}-x_{\beta}\right\vert \wedge y\leq P_{\gamma}^{d}y$ for all
$\gamma\in\Gamma$ and then $\limsup\limits_{\left(  \alpha,\beta\right)
}\left\vert x_{\alpha}-x_{\beta}\right\vert \wedge y=0$ which proves that
$\left(  x_{\alpha}\right)  $ is $uo$-Cauchy as required.
\end{proof}

As it was mentioned in Remark \ref{R2}, the order convergence can not be
expected in the above Theorem.

\begin{example}
Let $E=\ell_{\infty}$ and consider the net $\left(  x_{\alpha}\right)
_{\alpha\in A}$ defined by: $x_{\left(  p,q\right)  }=q%
{\textstyle\sum\limits_{p\leq k\leq q}}
e_{k},$ where the indexed set, $A=\left\{  \left(  p,q\right)  \in
\mathbb{N}\times\mathbb{N}:p\leq q\right\}  ,$ is ordered cordinatwise and
$\left(  e_{k}\right)  $ is the standard basis of $\ell_{\infty}$. Let $P_{k}$
be the band projection on the principal band generated by $e_{1}+...+e_{k}.$
It is easy to check that $P_{k}x_{\alpha}\overset{o}{\longrightarrow}0$ for
every $k.$ However, $\left(  x_{\alpha}\right)  $ is not order convergent in
$\ell_{\infty}^{u}$ because it hasn't an order bounded tail. Indeed if
$\alpha_{0}=\left(  p_{0},q_{0}\right)  $ is fixed then for $q>q_{0}\geq
p_{0}$ and $\alpha_{q}=\left(  p_{0},q\right)  $ we have $\alpha_{q}\geq
\alpha_{0}$ and $x_{\alpha_{q}}\geq qe_{p_{0}}.$
\end{example}

It is frequent in classical probability theory to consider random variables
with values in the extended real line, and most often in $\mathbb{R}_{\infty
}=\mathbb{R}\cup\left\{  \infty\right\}  ,$ and then it is question to check
if these variables are finite almost everywhere. The corresponding situation
in our model of measure-free theory, random variables finite almost everywhere
correspond to elements of $E^{u},$ and random variables with values in
$\mathbb{R}_{\infty}$ correspond to elements of $E_{s},$ the sup-completion of
$E$ and it was a challenge for us to recognize `finite elements' of $E_{s},$
those that belong to $E^{u}.$ In \cite[Theorem 14]{L-444}, the first author
obtains a characterization of elements of $E_{s}$ which do not belong to
$E^{u}$ for a Dedekind complete vector lattice $E;$ that characterization is a
crucial step in the proof of the main theorem in \cite{L-444}, which states
that a vector lattice is uo-complete if and only if it is universally
complete, so to prove that a net in $E^{u}$ is uo-convergent it is sufficient
to show that it is $uo$-Cauchy, as it has been done in the proof of Theorem
\ref{T2}. As the proof of \cite[Theorem 14]{L-444} is involved and will be
only applied in the case of universally complete spaces, we provide here an
alternative proof of this result in that special case, which is less involved.
Let us first recall some notations and facts. Consider a Dedekind complete
vector lattice $E$ with a weak order unit $e$ and denote by $E_{s}$ its
sup-completion. If $a\in E^{+},$ we denote by $P_{a}$ the band projection on
the principal band generated by $a.$ It satisfies:%
\[
P_{a}(x)=\sup\limits_{k}(x\wedge ka),\qquad\text{for all }x\in E^{+}.
\]
We shall use this formula to extend $P_{a}$ for all positive elements $x$ in
$E_{s}.$ On the other hand if $a$ is a positive element in $E_{s}$, we can
define the map $\varphi_{a}$ on $E$ by putting%
\[
\varphi_{a}\left(  x\right)  =\sup\limits_{n\in\mathbb{N}}\left(  x\wedge
na\right)  =\sup\limits_{\lambda>0}\left(  x\wedge\lambda a\right)
\]
for all $x\in E_{+}$ and then $\varphi_{a}\left(  x\right)  =\varphi
_{a}\left(  x^{+}\right)  -\varphi_{a}\left(  x^{-}\right)  $ for all $x\in
E.$ The next lemma shows that $\varphi_{a}$ is a band projection on $E.$

\begin{lemma}
\cite[Lemma 4]{L-444}Let $E$ be a Dedekind complete vector lattice with a weak
order unit $e.$ If $a\in E_{s}^{+},$ then $\varphi_{a}=P_{\varphi_{a}\left(
e\right)  }.$
\end{lemma}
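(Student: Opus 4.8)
The plan is to reduce the statement to the formula for band projections generated by elements of $E$ itself, which is recalled just above the lemma. Write $b=\varphi_a(e)=\sup_n(e\wedge na)$; since $e\wedge na\le e$ for every $n$ and $E$ is Dedekind complete, this supremum exists in $E$, so $b\in E^{+}$. Because $b$ lies in $E$, the formula $P_b(x)=\sup_k(x\wedge kb)$ shows that $\varphi_b=P_b$. As both $\varphi_a$ and $\varphi_b$ are defined on all of $E$ through $f(x)=f(x^{+})-f(x^{-})$, it suffices to prove that $\varphi_a(x)=\varphi_b(x)$ for every $x\in E^{+}$, and the lemma then follows since $P_{\varphi_a(e)}=P_b=\varphi_b=\varphi_a$.

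For the inequality $\varphi_b(x)\le\varphi_a(x)$ I would exploit the distributivity of $\wedge$ over suprema valid in the Dedekind complete cone $E_s$: for each $n$,
\[
x\wedge nb=x\wedge\sup_m\big(ne\wedge nma\big)=\sup_m\big(x\wedge ne\wedge nma\big)\le\sup_m\big(x\wedge nma\big)\le\varphi_a(x),
\]
whence $\varphi_b(x)=\sup_n(x\wedge nb)\le\varphi_a(x)$.

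The reverse inequality is the main obstacle, and its crux is to show that each $u=x\wedge na$ (an element of $E$, being dominated by $x$) lies in the band $B_b$ generated by $b$; granting this, $u=P_b(u)=\sup_m(u\wedge mb)\le\varphi_b(x)$, and taking the supremum over $n$ yields $\varphi_a(x)\le\varphi_b(x)$. To place $u$ in $B_b$ I would verify that $u\perp v$ for every $v\in E^{+}$ with $v\perp b$, and since $u\le na$ it is enough to prove $v\wedge na=0$. Here the weak order unit does the work: because $e$ is a weak unit we have $P_e=I$, so $w=\sup_k(w\wedge ke)$ with $w=v\wedge na$, and it therefore suffices to show $v\wedge na\wedge ke=0$ for each $k$. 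Using the positive-scalar identity $ke\wedge na=k\big(e\wedge\tfrac{n}{k}a\big)\le kb$ together with the chain $v\wedge na\wedge ke=v\wedge(ke\wedge na)\le v\wedge kb=0$ (the final equality because $v\perp b$) closes the argument. The only delicate points are the manipulations of infima and suprema that straddle $E$ and $E_s$, for which I would invoke the Dedekind completeness of the sup-completion and the distributive laws from \cite{b-1665,L-444}.
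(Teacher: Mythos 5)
Your proof is correct. Note, however, that the paper itself offers no proof of this statement: it is imported verbatim as Lemma~4 of \cite{L-444}, so there is no in-paper argument to compare yours against. Judged on its own, your reduction is sound: setting $b=\varphi_a(e)=\sup_n(e\wedge na)\in E$ (it exists by Dedekind completeness, being dominated by $e$), the inequality $\varphi_b\le\varphi_a$ on $E^{+}$ follows from the infinite distributive law applied inside $E$ (all the elements $ne\wedge nma$ and their supremum $nb$ already lie in $E$, so nothing delicate about $E_s$ is actually needed there), and the reverse inequality correctly hinges on showing $x\wedge na\in\{b\}^{dd}$, which you establish by checking disjointness from every $v\perp b$ via the weak-unit decomposition $w=\sup_k(w\wedge ke)$ and the estimate $ke\wedge na\le kb$. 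The auxiliary facts you lean on --- that an element of $E_s$ squeezed between $0$ and an element of $E$ belongs to $E$ (tightness of the embedding, which the paper itself invokes in the proof of Theorem~\ref{L4}), and that multiplication by a positive scalar is an order isomorphism of $E_s$ and hence preserves the relevant infima --- are all legitimate. This gives a clean, self-contained proof of a lemma the paper only cites.
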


\begin{remark}
It is easily checked that $P_{a}(x)=\sup\limits_{\beta}P_{a}(x_{\beta})$ for
each net $\left(  x_{\beta}\right)  $ in $E_{+}$ which increases to $x\in
E_{s}^{+}$. Instead of $\varphi_{a}$ we prefer the notation $P_{a}$ even if
$a$ in a positive element in $E_{s}.$
\end{remark}

The following lemma will be needed in the proof of the second main result of
this section.

\begin{lemma}
\label{L7}Let $E$ be a Dedekind complete vector lattice with weak order unit
and let $P$ and $Q$ be two disjoint band projections. Then%
\[
\left(  P+Q\right)  z=\left(  P\vee Q\right)  z\qquad\text{for all }z\in
E_{s}^{+}.
\]

\end{lemma}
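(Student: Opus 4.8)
The plan is to read the identity as the assertion that passage to the sup-completion commutes with the disjoint sum of band projections: for $z\in E_s^+$ the left-hand side $(P+Q)z$ is to be understood as $Pz+Qz$, with $P$ and $Q$ first extended individually to $E_s^+$, while the right-hand side is the extension to $E_s^+$ of the single band projection $P\vee Q$. The starting observation is purely Boolean and lives in $E$: since $P\perp Q$ means $PQ=QP=0$, the operator $P+Q$ is again a band projection and coincides with $P\vee Q$; moreover, for every $y\in E^+$ the components $Py$ and $Qy$ are disjoint, so that $Py+Qy=Py\vee Qy=(P\vee Q)y$. Thus the identity already holds on $E^+$, and the whole task is to propagate it to $E_s^+$.

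First I would fix $z\in E_s^+$ and choose, exactly as in the remark recording the order-continuity formula $P_a(x)=\sup_\beta P_a(x_\beta)$, an increasing net $(y_\beta)$ in $E^+$ with $y_\beta\uparrow z$. Because the extensions of band projections to $E_s$ are order continuous from below, we have
\[
Pz=\sup_\beta Py_\beta,\qquad Qz=\sup_\beta Qy_\beta,\qquad (P\vee Q)z=\sup_\beta (P\vee Q)y_\beta .
\]
Combining the last equality with the $E$-level identity $(P\vee Q)y_\beta=Py_\beta\vee Qy_\beta$ gives $(P\vee Q)z=\sup_\beta(Py_\beta\vee Qy_\beta)$. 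Since the lattice operation $\vee$ is order continuous for increasing nets in any Dedekind complete lattice, and hence in $E_s$, a routine two-sided estimate yields $\sup_\beta(Py_\beta\vee Qy_\beta)=(\sup_\beta Py_\beta)\vee(\sup_\beta Qy_\beta)=Pz\vee Qz$. It then remains only to identify $Pz\vee Qz$ with $Pz+Qz$.

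The hard part is precisely this last identification $Pz+Qz=Pz\vee Qz$ for the disjoint elements $Pz,Qz\in E_s^+$, since $E_s$ contains infinite elements for which the familiar vector-lattice rule $a\wedge b=0\Rightarrow a+b=a\vee b$ is not automatic and must be traced back to the cone structure of the sup-completion. I would secure it in one of two equivalent ways. Either invoke the order continuity of addition in Donner's sup-completion, a property recorded in \cite{b-1665,L-444}, which directly gives
\[
Pz+Qz=\sup_\beta Py_\beta+\sup_\beta Qy_\beta=\sup_\beta(Py_\beta+Qy_\beta)=\sup_\beta (P\vee Q)y_\beta=(P\vee Q)z ,
\]
or first establish the disjoint-sum-equals-supremum identity in $E_s^+$ from its counterpart in $E^+$ by the same approximation from below. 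Either route reduces the statement to the elementary Boolean computation in $E$; the genuine work, and the only delicate point, is verifying that the order-continuity and disjointness identities survive the passage into the cone $E_s$.
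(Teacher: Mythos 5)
Your proposal is correct and follows essentially the same route as the paper: approximate $z$ from below by an increasing net in $E^{+}$, use the elementary identity $Py+Qy=(P\vee Q)y$ on $E^{+}$, and reduce everything to Donner's fact that suprema are additive in $E_{s}$ (the paper cites the proof of \cite[Theorem 1.4]{b-1665} for $\sup(A+B)=\sup A+\sup B$, which is exactly your ``order continuity of addition''). The only cosmetic difference is that the paper makes the double-index supremum and the directedness argument ($\gamma\geq\alpha,\beta$) explicit where you compress it into a single line, and your detour through $Pz\vee Qz$ is not needed once that additivity is in hand.
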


\begin{proof}
Let $\left(  z_{\alpha}\right)  _{\alpha\in\Gamma}$ be a net in $E^{+}$ such
that $z_{\alpha}\uparrow z.$ We have seen that $Rz=\sup Rz_{\alpha}$ for all
band projection $R.$ Since $\left(  P\vee Q\right)  z_{\alpha}=Pz_{\alpha
}+Qz_{\alpha}\leq Pz+Qz$ holds for all $\alpha$ we get the inequality%
\[
\left(  P\vee Q\right)  z\leq Pz+Qz.
\]
For the reverse inequality we need the following fact: If $A,B$ are two
non-empty subsets of $E$ then $\sup\left(  A+B\right)  =\sup A+\sup B$ in
$E_{s}$ (see the proof of \cite[Theorem 1.4]{b-1665}). Apply this to subsets
$A=\left\{  Pz_{\alpha}\right\}  $ and $B=\left\{  Qz_{\alpha}\right\}  $ to
obtain%
\[
Pz+Qz=\sup\limits_{\alpha,\beta}\left(  Pz_{\alpha}+Qz_{\beta}\right)  .
\]
Now, given $\alpha,\beta$ in $\Gamma,$ there exists $\gamma\in\Gamma$ such
that $\gamma\geq\alpha$ and $\gamma\geq\beta,$ hence
\[
Pz_{\alpha}+Qz_{\beta}\leq Pz_{\gamma}+Qz_{\gamma}=\left(  P\vee Q\right)
z_{\gamma}\leq\left(  P\vee Q\right)  z.
\]
Taking the supremum over $\alpha,\beta$ we get
\[
\left(  P+Q\right)  z\leq\left(  P\vee Q\right)  z,
\]
which yields the desired result.
\end{proof}

\begin{theorem}
\label{L4}Let $E$ be a universally complete vector lattice with a weak order
unit $e$ and let $x\in E_{s},$ the sup-completion of $E$. Then $x\in E$ if and
only if $\inf\limits_{k>0}P_{(x-ke)^{+}}e=0.$
\end{theorem}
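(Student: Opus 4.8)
The plan is to prove the two implications separately, reducing throughout to the case $x\in E_{s}^{+}$. For a general $x\in E_{s}$ one uses that $x$ dominates some element of $E$, so that $x$ and $x^{+}$ differ by an element of $E$; since membership in $E$ and the stated condition are both insensitive to such a shift (up to a harmless change in the cut levels $ke$), the positive case suffices. Throughout I write $u_{k}=(x-ke)^{+}\in E_{s}^{+}$, let $Q_{k}=P_{u_{k}}$ be the associated band projection on $E$ (extended to $E_{s}$ as in the Remark following Lemma \ref{L7}) and $Q_{k}^{d}=I-Q_{k}$. Since $u_{k}$ decreases in $k$, the bands it generates decrease and hence $Q_{k}\downarrow$ and $Q_{k}^{d}\uparrow$.

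For the forward implication, assume $x\in E$. The key identity is $Q_{k}x=k\,Q_{k}e+u_{k}$: writing $x=ke+(x-ke)$ and using that $(x-ke)^{-}\perp u_{k}$ gives $Q_{k}(x-ke)=Q_{k}u_{k}=u_{k}$, whence $Q_{k}x\geq k\,Q_{k}e$. Put $p=\inf_{k}Q_{k}e=\inf_{k}P_{u_{k}}e\geq 0$. Each $Q_{k}e$ lies in the band generated by $u_{k}$, and since these bands decrease, $p$ lies in all of them, so $P_{p}\leq Q_{k}$ for every $k$. Applying $P_{p}$ to $Q_{k}x\geq k\,Q_{k}e$ and using $P_{p}(Q_{k}e)\geq P_{p}p=p$ yields $P_{p}x\geq k\,p$ for all $k$. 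As $P_{p}x$ is a fixed element of $E$ and $p\geq 0$, the Archimedean property forces $p=0$, which is exactly the desired condition.

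For the converse, assume $\inf_{k}P_{u_{k}}e=0$. First I translate this into a statement about projections: $\inf_{k}Q_{k}$ is a band projection whose value at the weak unit $e$ is $\inf_{k}Q_{k}e=0$, so $\inf_{k}Q_{k}=0$, i.e. $Q_{k}^{d}\uparrow I$. Second, each $Q_{k}^{d}x$ lies in $E$: since $Q_{k}^{d}u_{k}=0$ we get $Q_{k}^{d}x=Q_{k}^{d}(x\wedge ke)\in E$. Now set $R_{k}=Q_{k}^{d}-Q_{k-1}^{d}$ for $k\geq 1$, with $Q_{0}^{d}:=0$. By Lemma \ref{L6} the $R_{k}$ are pairwise disjoint band projections with $\bigvee_{k}Q_{k}^{d}=I$, so $\{R_{k}x\}_{k\geq 1}$ is a disjoint family in $E^{+}$ satisfying $\sum_{j\leq k}R_{j}x=Q_{k}^{d}x$. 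The crucial step now invokes lateral completeness of the universally complete space $E$: this disjoint family has a supremum $w\in E$. Finally one checks $x\leq w$: for each $m$, $x\wedge me\in E$ and $Q_{k}^{d}(x\wedge me)\uparrow_{k}x\wedge me$ because $Q_{k}^{d}\uparrow I$, while $Q_{k}^{d}(x\wedge me)\leq Q_{k}^{d}x=\bigvee_{j\leq k}R_{j}x\leq w$; hence $x\wedge me\leq w$, and letting $m\to\infty$ gives $x\leq w$. Since then the increasing family $x\wedge me$ is bounded in $E$, Dedekind completeness yields $x=\sup_{m}(x\wedge me)\in E$.

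The main obstacle is the reverse direction, and specifically the bookkeeping inside the cone $E_{s}$, where differences and suprema must be handled through Lemma \ref{L7} and the identity $P_{a}(\sup x_{\beta})=\sup P_{a}(x_{\beta})$ for increasing nets. The conceptual heart is recognizing that the hypothesis is precisely what makes $Q_{k}^{d}\uparrow I$, so that $x$ is recovered as the supremum of the genuinely finite pieces $Q_{k}^{d}x$; it is the passage from this disjoint decomposition to an honest element $w\in E$ that uses lateral completeness, i.e. universal rather than merely Dedekind completeness, and this is the step I expect to require the most care.
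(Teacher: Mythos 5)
Your proposal is correct and follows essentially the same route as the paper: the hypothesis gives $P_{(x-ke)^{+}}^{d}\uparrow I$, the associated disjoint differences decompose $x$ into pieces each dominated by a multiple of a piece of $e$ (hence lying in $E$), and lateral completeness of the universally complete $E$ assembles these into a majorant of $x$ in $E$, which suffices since $E$ is tightly embedded in $E_{s}$. The only difference is cosmetic — you bound $x$ by $\sup_{k}R_{k}x$ rather than by $\sum_{k}kQ_{k}e$ as the paper does, and you spell out the forward implication (via the Archimedean property) which the paper dismisses as obvious.
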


\begin{proof}
The forward direction is obvious. For the converse direction assume that
$\inf\limits_{k}P_{(x-ke)^{+}}e=0.$ We will prove that $x\leq y$ for some
$y\in E,$ which shows that $x\in E$ because $E$ is a tight imbedding cone in
$E_{s}$ (see \cite{b-1665}). Define two sequence of band projections by
putting for $k=1,2,...,$%
\[
P_{k}=P_{(x-ke)^{+}}^{d}=P_{e-P_{\left(  x-ke\right)  ^{+}}e},\qquad
Q_{k}=P_{k}-P_{k-1},
\]
with $P_{0}=0.$ Then $Q_{i}y\wedge Q_{j}y=0$ for all $i,j\in\mathbb{N}$ with
$i\neq j,$ and $y\in E_{+}$ by Lemma \ref{L7}, and $P_{k}\uparrow I$ by our
assumption. It follows therefore that%
\[
x\wedge le=\sup\limits_{n}P_{n}\left(  x\wedge le\right)  =\sup\limits_{n}%
{\textstyle\sum\limits_{k=1}^{n}}
Q_{k}(x\wedge le)=%
{\textstyle\sum\limits_{k=1}^{\infty}}
Q_{k}(x\wedge le).
\]
According to Lemma \ref{L7} we have that%
\[
Q_{m}\left(  z\right)  +Q_{n}(z)=Q_{m}\vee Q_{n}(z),\text{ for all }z\in
E_{s}^{+}.
\]
It follows that%
\begin{align*}
x  &  =\sup\limits_{l}\left(  x\wedge le\right)  =\sup\limits_{l}%
\sup\limits_{n}%
{\textstyle\sum\limits_{k=1}^{n}}
Q_{k}\left(  x\wedge le\right) \\
&  =\sup\limits_{n}\sup\limits_{l}%
{\textstyle\sum\limits_{k=1}^{n}}
Q_{k}\left(  x\wedge le\right)  =\sup\limits_{n}%
{\textstyle\sum\limits_{k=1}^{n}}
Q_{k}x\\
&  =%
{\textstyle\sum\limits_{k=1}^{\infty}}
Q_{k}x.
\end{align*}
\newline Now from the inequality%
\[
x\leq%
{\textstyle\sum\limits_{k=1}^{\infty}}
Q_{k}(x-ke)^{+}+%
{\textstyle\sum\limits_{k=1}^{\infty}}
Q_{k}ke\text{ },
\]
and the fact that $Q_{k}(x-ke)^{+}=0$\ for all $k\in\mathbb{N},$ we obtain
\[
x\leq%
{\textstyle\sum\limits_{k=1}^{\infty}}
kQ_{k}e.
\]
But $%
{\textstyle\sum\limits_{k=1}^{\infty}}
kQ_{k}e$ belongs to $E$ as a supremum of positive disjoint sequence, so $x\in
E$ as required.
\end{proof}

\begin{remark}
It is worth noting that the proof presented above still valid if we assume
only that $E$ is $\sigma$-universally complete.
\end{remark}

The next corollary occurs by combining Theorem \ref{L4} with Doob Maximal
Inequality \cite[Theorem 6.1]{L-06}. Recall that a net $\left(  x_{\alpha
}\right)  $ in $E$ is said to be $T$-bounded if $\left(  T\left\vert
x_{\alpha}\right\vert \right)  $ is bounded in $E.$

\begin{lemma}
\label{L5}Let $E$ be a Dedekind complete vector lattice with a weak order unit
$e$ equipped with a conditional expectation $T$ with $Te=e.$ If $f=\left(
f_{n},T_{n}\right)  $ is a $T$-bounded in $E$ then $f^{\ast}:=\sup\left\vert
f_{n}\right\vert \in E^{u}.$
\end{lemma}

Notice here that $f^{\ast}$ is well defined in $\left(  E^{u}\right)  _{s}$
and Theorem \ref{L4} will be used to prove that $f^{\ast}$ is in fact in
$E^{u}.$

\begin{proof}
Put $M=\sup\limits_{n}T\left\vert f_{n}\right\vert \in M.$ By Doob Maximal
Inequality \cite[Theorem 6.1]{L-06} we get $TP_{\left(  f_{n}-\lambda
e\right)  ^{+}}e\leq\lambda^{-1}M.$ Since $P_{\left(  f_{n}-\lambda e\right)
^{+}}e\uparrow P_{\left(  f^{\ast}-\lambda e\right)  ^{+}}e$ it follows that%
\[
TP_{\left(  f^{\ast}-\lambda e\right)  ^{+}}e\leq\lambda^{-1}M,
\]
and then $T\left(  \inf\limits_{\lambda>0}P_{\left(  f^{\ast}-\lambda
e\right)  ^{+}}e\right)  =0.$ Now using the strict positivity of $T$ to get
$\inf\limits_{\lambda>0}P_{\left(  f^{\ast}-\lambda e\right)  ^{+}}e=0,$ and
so the result follows by applying Theorem \ref{L4}.
\end{proof}

Before stating and proving our last result in this section (Proposition
\ref{L2}), that will be needed to prove the main theorem of this paper, let us
say a few words about Riemann integral in vector lattices. This notion had
been introduced by the authors in \cite{L-311}. The theory is a faithful
generalization of the classical theory. From that paper we invoke the
following result \cite[Lemma 5]{L-311}.

\begin{lemma}
\label{R1}Let $X$ be an order complete vector lattice with a weak order unit
$e.$ Let $p\in\left(  0,\infty\right)  $ and $a,\varepsilon\in\left(
0,\infty\right)  $ with $\varepsilon<a$. Then
\[
x^{p}-\varepsilon^{p}e=\int_{\varepsilon}^{a}pt^{p-1}P_{(x-te)^{+}%
}edt\text{\quad for all }x\in X\text{ with }\varepsilon e<x\leq ae.
\]

\end{lemma}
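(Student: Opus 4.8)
The plan is to prove the identity by a layer-cake (distribution-function) argument, reducing the vector-valued Riemann integral of \cite{L-311} to ordinary scalar integration through a resolution of the identity built from the band projections $P_{(x-te)^+}$. First I would record the order structure of the family $t\mapsto P_t:=P_{(x-te)^+}$. Since $t\mapsto (x-te)^+$ is decreasing, the bands it generates shrink, so $P_t$ is a decreasing family of band projections on $[\varepsilon,a]$. The hypotheses pin down the endpoints: $x\le ae$ gives $(x-ae)^+=0$, hence $P_a=0$, while $\varepsilon e<x$ gives $P_\varepsilon=I$ (equivalently, I pass to the band generated by $(x-\varepsilon e)^+$, on whose disjoint complement both sides of the claimed identity vanish because there $x=\varepsilon e$). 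All of this takes place inside the principal ideal $E_e\cong C(K)$, where $x^p$ is the functional-calculus power and is well defined since $\varepsilon e\le x\le ae$.

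Next, fix a partition $\varepsilon=t_0<t_1<\dots<t_n=a$ and set $Q_i=P_{t_{i-1}}-P_{t_i}$. By Lemma \ref{L6} the $Q_i$ are mutually disjoint band projections and, by the endpoint computation, $\sum_{i=1}^n Q_i=P_\varepsilon-P_a=I$. The crucial localization is the squeeze $t_{i-1}Q_ie\le Q_ix\le t_iQ_ie$: on the range of $Q_i\le P_{t_{i-1}}$ one has $x\ge t_{i-1}e$, and on the range of $Q_i\le P_{t_i}^{\,d}$ one has $x\le t_ie$. Applying the monotone functional calculus $s\mapsto s^p$ band-wise, and using that band projections commute with it, yields $t_{i-1}^p Q_ie\le Q_ix^p\le t_i^p Q_ie$. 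Summing over $i$ and subtracting $\varepsilon^p e=\sum_i\varepsilon^p Q_ie$ gives, for every partition,
\[
\sum_{i=1}^n\bigl(t_{i-1}^p-\varepsilon^p\bigr)Q_ie\ \le\ x^p-\varepsilon^p e\ \le\ \sum_{i=1}^n\bigl(t_i^p-\varepsilon^p\bigr)Q_ie .
\]

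Finally I would identify the right-hand integral with the same bounds. Writing $P_{t_i}e=\sum_{j>i}Q_je$ and applying Abel summation, any Riemann sum $\sum_i p\tau_i^{p-1}P_{t_i}e\,\Delta t_i$ of the integrand collapses to $\sum_j Q_je\bigl(\sum_{i<j}p\tau_i^{p-1}\Delta t_i\bigr)$, in which the inner bracket is an ordinary Riemann sum for $\int_\varepsilon^{t_{j-1}}pt^{p-1}\,dt=t_{j-1}^p-\varepsilon^p$. Hence the Riemann sums converge in order to $\sum_j(t_{j-1}^p-\varepsilon^p)Q_je$, the lower bound above; the integrand (a monotone projection term times a continuous scalar) is Riemann integrable in the sense of \cite{L-311}, so this limit is $\int_\varepsilon^a pt^{p-1}P_{(x-te)^+}e\,dt$. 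Both this integral and $x^p-\varepsilon^p e$ are trapped between the displayed lower and upper sums, whose gap is $\sum_i(t_i^p-t_{i-1}^p)Q_ie\le\bigl(\max_i(t_i^p-t_{i-1}^p)\bigr)e$; by uniform continuity of $s\mapsto s^p$ on $[\varepsilon,a]$ this gap tends to $0$ in order as the mesh shrinks, forcing the two quantities to coincide.

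The main obstacle I anticipate is the bookkeeping at the interface with the \cite{L-311} Riemann integral, namely verifying integrability of the integrand and that the order limit of its Riemann sums is legitimately the integral, together with a careful justification of the band-wise functional-calculus squeeze. The layer-cake identity $t_{i-1}Q_ie\le Q_ix\le t_iQ_ie$ is the conceptual heart, and the remainder is estimation.
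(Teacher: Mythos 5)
The paper does not prove this lemma: it is imported verbatim from \cite[Lemma 5]{L-311}, so there is no in-paper argument to compare against. Judged on its own, your layer-cake reconstruction is correct and is the natural route. The two key facts you use are sound: for a nested partition $\varepsilon=t_0<\dots<t_n=a$ the projections $Q_i=P_{t_{i-1}}-P_{t_i}$ are pairwise disjoint (Lemma \ref{L6}), and the squeeze $t_{i-1}Q_ie\le Q_ix\le t_iQ_ie$ follows from $P_u(x-se)=P_u(x-se)^+\ge 0$ for $u=(x-se)^+$ on one side and $P_u^{d}(x-se)^+=0$ on the other; band projections commute with the functional calculus on the ideal generated by $e$, so the $p$-th power passes through. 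One small imprecision: $\varepsilon e<x$ does \emph{not} give $P_\varepsilon=I$ (in $\mathbb{R}^2$ with $e=(1,1)$ and $x=(\varepsilon,a)$ the band generated by $(x-\varepsilon e)^+$ is proper), so the parenthetical reduction to the band generated by $(x-\varepsilon e)^+$ is not an ``equivalent'' rephrasing but the actual argument — you do carry it out correctly, since on the complementary band $x=\varepsilon e$ and both sides vanish. The remaining interface with the \cite{L-311} integral is unproblematic: $t\mapsto P_{(x-te)^+}e$ is decreasing and order bounded by $e$, so the gap between upper and lower Darboux-type sums is dominated by $\bigl(\max_i(t_i^p-t_{i-1}^p)\bigr)e$, which tends to $0$ in order with the mesh, and your Abel-summation identification of the Riemann sums with $\sum_j(t_{j-1}^p-\varepsilon^p)Q_je$ up to a uniform $o(1)\cdot e$ error closes the sandwich.
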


Now if $p\in\lbrack1,\infty),$ the same proof yields the following:

\begin{lemma}
\label{L9} Let $X$ be an order complete vector lattice with a weak order unit
$e.$ Let $a\in\left(  0,\infty\right)  $ and $p\in\lbrack1,\infty).$ Then%
\[
x^{p}=\int_{0}^{a}pt^{p-1}P_{(x-te)^{+}}edt\text{\quad for all }x\in X\text{
with }0\leq x\leq ae.
\]

\end{lemma}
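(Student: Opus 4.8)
The goal is to establish Lemma \ref{L9}, which extends Lemma \ref{R1} by removing the lower cutoff $\varepsilon$ and dropping the constraint $\varepsilon e < x$, so that the integral representation $x^{p} = \int_{0}^{a} p t^{p-1} P_{(x-te)^{+}} e \, dt$ holds for all $x$ with $0 \le x \le ae$. The natural plan is to obtain this from Lemma \ref{R1} by a limiting argument as $\varepsilon \downarrow 0$, since for fixed $\varepsilon \in (0,a)$ the earlier lemma already gives the formula on the portion of $x$ exceeding $\varepsilon e$.

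\medskip\noindent\textbf{Proof plan.}
First I would fix $x$ with $0 \le x \le ae$ and, for $\varepsilon \in (0,a)$, apply Lemma \ref{R1} to the element $x_\varepsilon := x \vee \varepsilon e$, which satisfies $\varepsilon e < x_\varepsilon \le ae$ (after enlarging $a$ trivially if needed, noting $\varepsilon e \le a e$). This yields $x_\varepsilon^{p} - \varepsilon^{p} e = \int_{\varepsilon}^{a} p t^{p-1} P_{(x_\varepsilon - te)^{+}} e \, dt$. The key observation is that for $t \ge \varepsilon$ one has $(x_\varepsilon - te)^{+} = (x - te)^{+}$, because on the band where $x < \varepsilon e$ the element $x_\varepsilon$ equals $\varepsilon e \le te$, so the band projection $P_{(x_\varepsilon - te)^{+}}$ agrees with $P_{(x-te)^{+}}$; thus the integrand matches that of the target formula on $[\varepsilon, a]$. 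Then I would let $\varepsilon \downarrow 0$ and track three limits: the left-hand side $x_\varepsilon^{p} - \varepsilon^{p} e$ converges in order to $x^{p}$, since $x_\varepsilon \downarrow x$ (hence $x_\varepsilon^p \downarrow x^p$ by order continuity of the $p$-th power functional calculus) and $\varepsilon^p e \to 0$; meanwhile the integral $\int_{\varepsilon}^{a}$ increases to $\int_{0}^{a}$ as the lower limit decreases.

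\medskip\noindent\textbf{Handling the limit of the integral.}
The step requiring care is the passage $\int_{\varepsilon}^{a} \to \int_{0}^{a}$ inside the vector-lattice Riemann integral. The integrand $g(t) := p t^{p-1} P_{(x-te)^{+}} e$ is a positive element of $E$ for each $t$, and as $t$ decreases the projection band of $(x-te)^{+}$ increases, so $P_{(x-te)^{+}} e$ is increasing as $t \downarrow 0$; combined with $p t^{p-1} \ge 0$ this makes the partial integrals $\int_{\varepsilon}^{a} g(t)\,dt$ increase as $\varepsilon \downarrow 0$. I would argue that this net of partial integrals is order bounded above, which is exactly the content just proved, namely each partial integral equals $x_\varepsilon^p - \varepsilon^p e \le x^p \le a^p e$; hence the supremum exists in $E$ and, by the definition of the improper Riemann integral from \cite{L-311} together with the order continuity of that integral, equals $\int_{0}^{a} g(t)\,dt$. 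Taking order limits on both sides of the identity for $x_\varepsilon$ then delivers $x^{p} = \int_{0}^{a} p t^{p-1} P_{(x-te)^{+}} e \, dt$.

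\medskip\noindent\textbf{Main obstacle.}
The main obstacle I anticipate is the integrability at the lower endpoint $t = 0$, i.e.\ confirming that the improper integral $\int_{0}^{a}$ genuinely converges in order rather than blowing up. This is precisely where the hypothesis $p \ge 1$ enters: one needs the factor $t^{p-1}$ to remain integrable near $0$ despite $P_{(x-te)^{+}} e$ growing up to $P_x e$ there, and the order boundedness furnished by the left-hand side $x_\varepsilon^p - \varepsilon^p e \le x^p$ is what rescues convergence. I would therefore lean on the uniform order bound by $x^p$ to justify existence of the improper integral, and invoke the order continuity of the Riemann integral in $E$ (as developed in \cite{L-311}) to interchange the limit $\varepsilon \downarrow 0$ with the integral. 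Everything else is a routine transcription of the argument behind Lemma \ref{R1}, which is why the author remarks that ``the same proof yields'' the result.
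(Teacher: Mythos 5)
Your argument is correct, but it takes a different route from the paper: the paper offers no written proof at all, merely asserting that the proof of \cite[Lemma 5]{L-311} (Lemma \ref{R1}) goes through verbatim with lower endpoint $0$ when $p\geq 1$, whereas you treat Lemma \ref{R1} as a black box and deduce Lemma \ref{L9} from its statement by letting $\varepsilon\downarrow 0$. Your reduction is sound: setting $x_{\varepsilon}=x\vee\varepsilon e$ one has $(x_{\varepsilon}-te)^{+}=((x-te)\vee(\varepsilon-t)e)^{+}=(x-te)^{+}$ for $t\geq\varepsilon$, the left-hand side $x_{\varepsilon}^{p}-\varepsilon^{p}e=(x^{p}-\varepsilon^{p}e)^{+}$ decreases in order to $x^{p}$, and the partial integrals increase and are order bounded, so the identity passes to the limit. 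Two details deserve a remark. First, Lemma \ref{R1} is stated with the strict inequality $\varepsilon e<x$, while $x_{\varepsilon}$ only satisfies $\varepsilon e\leq x_{\varepsilon}$; this is harmless (apply \ref{R1} with a slightly smaller $\varepsilon'$, or note the hypothesis is really $\varepsilon e\leq x$), but should be said. Second, your ``main obstacle'' paragraph overstates the difficulty at $t=0$: for $p\geq1$ the integrand satisfies $0\leq pt^{p-1}P_{(x-te)^{+}}e\leq pa^{p-1}e$ on $(0,a]$ and is monotone in $t$, so there is no genuine improper-integral issue; the only fact needed from \cite{L-311} is that $\int_{0}^{a}=\lim_{\varepsilon\downarrow0}\int_{\varepsilon}^{a}$ for such a bounded monotone integrand. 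This is also exactly where $p\geq1$ enters, as you correctly observe. The trade-off is that your derivation is self-contained given only the statement of Lemma \ref{R1}, while the paper's (implicit) approach requires re-opening the proof in \cite{L-311}; either is acceptable here.
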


We recall that for if $E$ is a Dedekind complete vector lattice with a weak
order unit $e$ and if $x\geq e$ then $x$ has an inverse in the $f$-algebra
$E^{u}$ (see \cite[Theorem 3.4]{L-381}), we denote this inverse by $x^{-1}.$
Since $E$ is an ideal in $E^{u}$ and $x^{-1}\leq e$ it turns out that $x^{-1}$
belongs to $E.$

\begin{proposition}
\label{L2}Let $E$ be a Dedekind complete vector lattice with a weak order unit
$e.$ Let $\left(  x_{n}\right)  $ be a sequence in $E^{+}$ and $R_{n}=%
{\textstyle\sum\limits_{i=1}^{n}}
x_{i}.$ Then the series $%
{\textstyle\sum}
x_{n}(e+R_{n})^{-2}$ converges in order and we have%
\[%
{\textstyle\sum\limits_{n=1}^{\infty}}
x_{n}(e+R_{n})^{-2}\leq e.
\]

\end{proposition}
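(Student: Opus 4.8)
The plan is to transcribe into the commutative $f$-algebra $E^{u}$ the classical telescoping estimate $\sum_{n}\frac{x_{n}}{(1+R_{n})^{2}}\leq\sum_{n}\bigl(\frac{1}{1+R_{n-1}}-\frac{1}{1+R_{n}}\bigr)=1-\frac{1}{1+R_{N}}\leq1$. First I would install the ambient algebraic structure. Since $R_{n}\geq0$ we have $e+R_{n}\geq e$, so by the fact recalled just before the proposition each $e+R_{n}$ is invertible in $E^{u}$ with $(e+R_{n})^{-1}\leq e$, and because $E$ is an ideal of $E^{u}$ this inverse lies in $E$. I set $R_{0}=0$, so that $(e+R_{0})^{-1}=e$.

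The heart of the argument is the single-term estimate
\[
x_{n}(e+R_{n})^{-2}\leq(e+R_{n-1})^{-1}-(e+R_{n})^{-1}\qquad(n\geq1).
\]
To obtain it I would start from the exact identity, valid in the commutative algebra $E^{u}$,
\[
(e+R_{n-1})^{-1}-(e+R_{n})^{-1}=(e+R_{n-1})^{-1}(e+R_{n})^{-1}\bigl[(e+R_{n})-(e+R_{n-1})\bigr]=x_{n}\,(e+R_{n-1})^{-1}(e+R_{n})^{-1},
\]
using $x_{n}=R_{n}-R_{n-1}$. Since $e+R_{n-1}\leq e+R_{n}$, order-reversal of inversion in an $f$-algebra (which follows from $v^{-1}-u^{-1}=v^{-1}(u-v)u^{-1}$ together with the positivity of products of positive elements) gives $(e+R_{n-1})^{-1}\geq(e+R_{n})^{-1}\geq0$; multiplying both sides by the positive element $x_{n}(e+R_{n})^{-1}$ turns the right-hand side of the identity into a quantity dominating $x_{n}(e+R_{n})^{-2}$, which is the displayed inequality.

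With the per-term bound in hand, telescoping gives for every $N$
\[
\sum_{n=1}^{N}x_{n}(e+R_{n})^{-2}\leq\sum_{n=1}^{N}\bigl[(e+R_{n-1})^{-1}-(e+R_{n})^{-1}\bigr]=e-(e+R_{N})^{-1}\leq e.
\]
Thus the partial sums $S_{N}=\sum_{n=1}^{N}x_{n}(e+R_{n})^{-2}$ form an increasing sequence (each summand is positive) bounded above by $e$; they lie in $E$ because $0\leq S_{N}\leq e$ and $E$ is an ideal in $E^{u}$. By Dedekind completeness this sequence converges in order to $\sup_{N}S_{N}$, and the bound passes to the limit, yielding $\sum_{n=1}^{\infty}x_{n}(e+R_{n})^{-2}\leq e$.

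The individual verifications carry no real surprise; the only point demanding care, and the step I would spell out most explicitly, is the legitimacy of the algebraic manipulations in $E^{u}$: that the inverses exist and remain in $E$, that multiplication is commutative so the factors telescope cleanly, and that inversion reverses order among elements dominating $e$. Once these $f$-algebra facts are pinned down, the estimate is a faithful copy of the scalar computation.
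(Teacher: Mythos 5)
Your proof is correct, but it takes a genuinely different route from the paper's. You establish the per-term telescoping bound $x_{n}(e+R_{n})^{-2}\leq(e+R_{n-1})^{-1}-(e+R_{n})^{-1}$ directly in the $f$-algebra $E^{u}$, using only the factorization $u^{-1}-v^{-1}=u^{-1}v^{-1}(v-u)$, commutativity, positivity of products of positive elements, and the resulting order-reversal of inversion on the set of elements dominating $e$; summing, telescoping against $(e+R_{0})^{-1}=e$, and taking the supremum of the increasing bounded partial sums then finishes the argument in one stroke. The paper instead represents each increment via the vector-valued Riemann integral of \cite{L-311}, writing $x_{n}=\int_{0}^{na}(P_{n,t}-P_{n-1,t})e\,dt$ with $P_{n,t}=P_{(R_{n}-te)^{+}}$, bounds $(e+R_{n})^{-2}(P_{n,t}-P_{n-1,t})e$ by $(1+t)^{-2}(P_{n,t}-P_{n-1,t})e$ on the relevant band, integrates $\int(1+t)^{-2}\,dt\leq1$, and must first restrict to sequences satisfying $0\leq x_{n}\leq\lambda e$ and then pass to the general case by truncation. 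Your argument is shorter, avoids the Riemann-integral machinery and the two-step reduction entirely, and rests only on standard $f$-algebra identities --- which you rightly flag as the points needing care, and which are indeed available since $E^{u}$ is a universally complete, hence Archimedean and commutative, $f$-algebra with multiplicative unit $e$, so that inverses of elements above $e$ exist, lie below $e$, and hence in the ideal $E$. What the paper's approach buys is consistency with the functional-calculus toolkit it deploys elsewhere (Lemmas \ref{R1} and \ref{L9}), at the cost of a considerably more involved computation.
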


\begin{proof}
We assume first that the sequence $\left(  x_{n}\right)  $ satisfies $0\leq
x_{n}\leq\lambda e$ for some $\lambda\in\left(  0,\infty\right)  $ and for
every $n=1,2....$ By Lemma \ref{L9} we have%
\[
x_{n}=R_{n}-R_{n-1}=\int_{0}^{na}\left(  P_{n,t}-P_{n-1,t}\right)  edt,
\]
where $P_{n,t}=P_{(R_{n}-te)^{+}}.$ Applying Theorem 4 in \cite{L-311} to get%
\[
x_{n}(e+R_{n})^{-2}=\int_{0}^{na}(e+R_{n})^{-2}(P_{n}\left(  t\right)
-P_{n-1}\left(  t\right)  )edt.
\]
Now observe that%
\[
(e+R_{n})^{-2}=\dfrac{e+R_{n})^{-2}}{\left(  1+t^{2}\right)  }\left(
te-R_{n}\right)  \left(  \left(  2+t\right)  e+R_{n}\right)  +\dfrac
{a}{\left(  1+t\right)  ^{2}}\leq
\]
which combined with the fact that $(P_{n,t}-P_{n-1,t})\left(  te-R_{n}\right)
\leq0$ leads to the following:%
\[
(e+R_{n})^{-2}(P_{n,t}-P_{n-1,t})e\leq\left(  1+t\right)  ^{-2}(P_{n,t}%
-P_{n-1,t})e
\]
and then
\[
x_{n}(e+R_{n})^{-2}\leq\int_{\varepsilon}^{na}\frac{1}{(1+t)^{2}}\left(
P_{n}\left(  t\right)  -P_{n-1}\left(  t\right)  \right)  edt.
\]
It follows that
\begin{align*}%
{\textstyle\sum\limits_{k=1}^{n}}
x_{n}(e+R_{n})^{-2}  &  \leq\int_{\varepsilon}^{na}\frac{1}{(1+t)^{2}}%
{\textstyle\sum\limits_{k=1}^{n}}
\left(  P_{k,t}-P_{k-1}\left(  t\right)  \right)  edt\\
&  \leq\int_{\varepsilon}^{na}\frac{dt}{(1+t)^{2}}e\leq e.
\end{align*}
This proves the result for bounded sequences in the ideal $I_{e}$ generated by
$e$. For general case we use our first case to get%
\[%
{\textstyle\sum\limits_{i=1}^{n}}
(x_{i}\wedge\lambda e)(e+R_{i}^{\lambda})^{-2}\leq e,
\]
for all positive real $\lambda,$ where $R_{i}^{\lambda}=R_{n}=%
{\textstyle\sum\limits_{j=1}^{i}}
\left(  x_{j}\wedge\lambda e\right)  .$ It follows that
\[%
{\textstyle\sum\limits_{k=1}^{n}}
x_{n}(e+R_{k})^{-2}\leq e.
\]
As this occurs for each integer $n$ we deduce that
\[%
{\textstyle\sum\limits_{k=1}^{\infty}}
x_{k}(e+R_{k})^{-2}\leq e,
\]
which completes the proof.
\end{proof}

\section{Burkholder Theorem}

We assume throughout that $E$ is a Dedekind complete vector lattice with a
weak order unit $e$ and $T$ is a conditional expectation with $Te=e.$ By
taking $L^{1}\left(  T\right)  $ we will assume also that the space $E$ is
$T$-universally complete.

Consider now a filtration $\left(  T_{n}\right)  _{n\geq1}$ with $T_{1}%
=T$.\textrm{ }If $x=\left(  x_{n}\right)  _{n\geq1}$ is an adapted process we
define the quadratic variation process associated to $x$ by putting%
\[
S_{k}(x)=%
{\textstyle\sum\limits_{j=1}^{k}}
\left(  \Delta x_{j}\right)  ^{2}\quad\text{for all }k\geq1.
\]
Also, we set%
\[
x_{k}^{\ast}=\sup\limits_{1\leq i\leq k}x_{i}\quad\text{for all \ \ }k\geq1.
\]
If $P=\left(  P_{n}\right)  _{n\geq1}$ is a stopping time, the stopped process
$x^{P}$ is defined by%
\[
x_{n}^{P}=%
{\textstyle\sum\limits_{k=1}^{n-1}}
\Delta P_{k}x_{k}+P_{n-1}^{d}x_{n}.
\]
We also put%

\[
x_{n}^{P-1}=%
{\textstyle\sum\limits_{k=1}^{n-1}}
\Delta P_{k}x_{k-1}+P_{n-1}^{d}x_{n-1}.
\]
It follows easily from the definition that%
\begin{equation}
S_{n}^{P}\left(  x\right)  -S_{n}^{P-1}\left(  x\right)  =\left(  \left(
\Delta x_{n}\right)  ^{2}\right)  ^{P}=\left(  \Delta x_{n}^{P}\right)  ^{2},
\label{10}%
\end{equation}
where the last equality follows from the fact that the projections $\Delta
P_{1},...,\Delta P_{n-1},P_{n-1}^{d}$ are disjoint.

The following lemma, taken from \cite{L-311}, is one of the classical
properties of stopped martingales and will be used later.

\begin{lemma}
\label{L1}\textrm{\cite[Lemma 9]{L-311} }Let $(x_{n})_{n\geq1}$ be a
submartingale, and $(P_{k})_{k\geq1}$ be a stopping time then we have%
\[
T\left(  x_{P\wedge n}\right)  \leq T(x_{n})\qquad\text{for all }n=1,2,...
\]

\end{lemma}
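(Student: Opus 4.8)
The plan is to reduce the inequality to a statement about the increments of the stopped process and then exploit the commutation relation between the band projections $P_k$ and the conditional expectations $T_k$ that is built into the definition of a stopping time. First I would identify $x_{P\wedge n}$ with the stopped value $x_n^{P}=\sum_{k=1}^{n-1}\Delta P_k x_k+P_{n-1}^{d}x_n$ and compute the increment $\Delta x_{n+1}^{P}=x_{n+1}^{P}-x_n^{P}$. Writing $P_{n-1}^{d}=I-P_{n-1}$ and $P_n^{d}=I-P_n$, the terms involving $x_n$ collapse and one finds $\Delta x_{n+1}^{P}=P_n^{d}\,\Delta x_{n+1}$, where $\Delta x_{n+1}=x_{n+1}-x_n$.

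Next I would introduce the difference process $D_n:=x_n-x_n^{P}$. Since $P_0=0$ forces $x_1^{P}=x_1$, we have $D_1=0$, and the increment computation yields $D_{n+1}-D_n=\Delta x_{n+1}-P_n^{d}\,\Delta x_{n+1}=P_n\,\Delta x_{n+1}$. Telescoping then produces the explicit representation $D_n=\sum_{k=1}^{n-1}P_k\,\Delta x_{k+1}$, which expresses $x_n-x_{P\wedge n}$ as a single transform of the martingale differences by the projections $P_k$.

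It then suffices to show $T(D_n)\geq 0$, that is, that each term $T(P_k\,\Delta x_{k+1})$ is positive. Here I would use two ingredients: the tower relation $T T_k=T_{\min(1,k)}=T$ coming from the filtration axioms, and the commutation $P_k T_k=T_k P_k$ from the stopping-time definition, which is admissible since the index of $P_k$ does not exceed that of $T_k$. Combining them gives $T(P_k\,\Delta x_{k+1})=T\big(T_k(P_k\,\Delta x_{k+1})\big)=T\big(P_k\,T_k(\Delta x_{k+1})\big)$. The submartingale hypothesis gives $T_k(\Delta x_{k+1})=T_k x_{k+1}-x_k\geq 0$ (using adaptedness $T_k x_k=x_k$), and since $P_k$ and $T$ are positive operators the whole expression is nonnegative. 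Summing over $k$ yields $T(D_n)\geq 0$, that is, $T(x_{P\wedge n})\leq T(x_n)$.

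The computation is essentially routine once the increment formula is in hand; the main point to watch is the bookkeeping of the band projections in the collapse $\Delta x_{n+1}^{P}=P_n^{d}\,\Delta x_{n+1}$ and the verification that the commutation $P_k T_k=T_k P_k$ is invoked only for admissible indices. Everything else follows from positivity of $T$ and of the band projections together with the filtration's tower property.
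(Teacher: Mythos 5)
Your proof is correct: the increment computation $\Delta x_{n+1}^{P}=P_n^{d}\,\Delta x_{n+1}$, the telescoped representation $x_n-x_{P\wedge n}=\sum_{k=1}^{n-1}P_k\,\Delta x_{k+1}$, and the positivity of each $T(P_k\,\Delta x_{k+1})=T\bigl(P_k\,T_k(\Delta x_{k+1})\bigr)$ via the admissible commutation $P_kT_k=T_kP_k$ and the tower property all check out. The paper itself gives no proof, citing Lemma 9 of the authors' earlier work on Burkholder inequalities; your argument is the standard martingale-transform proof of optional stopping and is exactly what that citation is expected to contain.
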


We are ready now to state our main result.

\begin{theorem}
\label{Main}Let $E$ be a Dedekind complete Riesz space with weak order unit
$e,$ equipped with a conditional expectation $T$ with $Te=e.$ Let $f=\left(
f_{n}\right)  _{n\geq1}$ and $g=\left(  g_{n}\right)  _{n\geq1}$ be two
martingales in $L^{2}\left(  T\right)  $ with respect to the same filtrartion
$\left(  T_{n}\right)  _{n\geq1},$ with quadratic variation processes $\left(
S_{n}\left(  f\right)  \right)  _{n\geq1}$ and $\left(  S_{n}\left(  g\right)
\right)  _{n\geq1}$ respectively. Assume that $S_{n}(g)\leq S_{n}(f)$ for all
$n=1,2,...,$ and $N_{1}\left(  f\right)  :=\sup\limits_{n}T\left\vert
f_{n}\right\vert \in E^{u}.$ Then $\left(  g_{n}\right)  _{n\geq1}$ is order
convergent in $E^{u}.$
\end{theorem}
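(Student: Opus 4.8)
The plan is to mirror the classical Burkholder argument via a stopping-time truncation, reducing the unbounded $L^1$ hypothesis to a situation where the tools developed in Section 2 apply. The overarching strategy rests on Theorem \ref{ThK}: to show $\left(g_n\right)$ converges in order in $E^u$, it suffices to produce a sequence of band projections $\left(P_k\right)$ with $P_k\uparrow I$ such that each truncated sequence $\left(P_k g_n\right)_{n\geq1}$ converges in order in $E$. The natural choice is to let $P_k$ be the band projections associated to the level sets of the maximal function $f^{\ast}=\sup_n\left|f_n\right|$; since $N_1(f)=\sup_n T\left|f_n\right|\in E^u$ by hypothesis, Lemma \ref{L5} guarantees $f^{\ast}\in E^u$, and Theorem \ref{L4} then supplies that $P_{\left(f^{\ast}-ke\right)^{+}}^{d}e\uparrow e$, i.e.\ the complementary projections increase to the identity. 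This is precisely the abstract analogue of the fact that $\{f^{\ast}<k\}$ exhausts the space.

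The next step is to analyze $g$ on each band $P_k$. On the set where $f^{\ast}\leq ke$, the quadratic variation of $f$ is controlled, and since $S_n(g)\leq S_n(f)$ by hypothesis, the quadratic variation $S_n(g)$ is likewise bounded there. I would introduce the stopping time $\tau_k$ defined through the first-entrance band projections into $\left\{f^{\ast}>ke\right\}$, and pass to the stopped martingale $g^{\tau_k}$. The stopped process remains a martingale (using that band projections commute with the conditional expectations $T_j$ along the filtration, as recorded in the stopping-time definition), and on the relevant band its quadratic variation is dominated by a multiple of $e$ plus the jump at the stopping instant, which is itself controlled by $f^{\ast}$. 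Using identity \eqref{10} together with Lemma \ref{L1} applied to the submartingale $S_n^{\tau_k}(g)$, I expect to obtain a uniform bound $T\,S_n^{\tau_k}(g)\leq C_k e$ for each fixed $k$, hence $\sup_n T\left(g_n^{\tau_k}\right)^2\in E^u$ as well. This places the stopped martingale in $L^2(T)$ with uniformly bounded second moments, which is the regime in which order convergence can be secured directly.

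To conclude convergence of each stopped martingale I would invoke the vector-lattice Austin sample function theorem (the main result of \cite{L-21}), which is the key external input advertised in the introduction: an $L^2$-bounded martingale whose quadratic variation is bounded converges in order. Here Proposition \ref{L2} plays its role, providing the summability estimate $\sum_n x_n(e+R_n)^{-2}\leq e$ that underlies the Austin-type control of the martingale differences in terms of $S_n(g)$. Combining Austin's theorem with the $L^2$ bound on $g^{\tau_k}$ yields order convergence of $\left(g_n^{\tau_k}\right)_{n\geq1}$; since $g^{\tau_k}$ and $g$ agree on the band $P_k$ (the stopping time only alters the process after exiting $\left\{f^{\ast}\leq ke\right\}$, a set disjoint from $P_k$), we get that $\left(P_k g_n\right)_{n\geq1}$ converges in order in $E$ for every $k$.

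Finally, applying Theorem \ref{ThK} with these band projections $P_k\uparrow I$ delivers order convergence of $\left(g_n\right)$ in $E^u$, as required. The main obstacle I anticipate is the second step: showing that the stopped quadratic variation is genuinely $T$-bounded. The classical proof exploits that the jump of the martingale at the stopping time is controlled by $f^{\ast}$, but in the vector-lattice setting one must carry this out purely through band-projection identities and the commutation relations of the stopping time with the filtration, taking care that all suprema defining $f^{\ast}$ and the level-set projections live in $E^u$ (or its sup-completion) rather than $E$, and only land in $E^u$ after invoking Theorem \ref{L4}. Verifying that $g^{\tau_k}$ is a bona fide $L^2(T)$-martingale and that identity \eqref{10} transfers the domination $S_n(g)\leq S_n(f)$ to the stopped processes is where the bulk of the genuinely new technical work lies.
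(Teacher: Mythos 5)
Your overall architecture --- truncate by a stopping time, show each truncated sequence converges, and assemble with Theorem \ref{ThK} using complementary band projections increasing to $I$ via Lemma \ref{L5} and Theorem \ref{L4} --- is exactly the paper's, and that final assembly step is sound. But the core of your argument has two genuine gaps.

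First, your stopping time $\tau_k$ is triggered only by $f^{\ast}$ exceeding $ke$, and you claim that on the band where $f^{\ast}\leq ke$ the quadratic variation of $f$ is controlled. That is false: bounding the running maximum of a martingale does not bound its quadratic variation pointwise (think of a random walk with small steps confined to $[-k,k]$, whose quadratic variation grows linearly in $n$). Consequently the hypothesis $S_n(g)\leq S_n(f)$ gives you no pointwise control on that band. The stopping time must also be triggered by $S_n(f)$ exceeding the level --- the paper stops on $\left(f_n-\lambda e\right)^{+}\vee\left(S_n(f)-\lambda e\right)^{+}$ --- and correspondingly the exhausting projections must be $Q_k=P_{(f^{\ast}-ke)^{+}\vee(S(f)-ke)^{+}}$, with $S(f)\in E^{u}$ supplied by Austin's theorem. (That finiteness of the quadratic variation of an $L^1$-bounded martingale, not ``$L^2$-bounded martingales converge,'' is what Austin's theorem asserts and is what it is used for here.)

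Second, and more seriously, even with the correct stopping time the stopped martingale $g^{P}$ is \emph{not} $L^{2}(T)$-bounded: the overshoot $\left(\Delta f_{P}\right)^{2}$ at the stopping instant is only controlled in $L^{1}$ via $N_{1}(f)$, so all one obtains is $T$-boundedness of the increments $\Delta g_{n}^{P}$ (Lemma \ref{L8}), not of $T\left(g_{n}^{P}\right)^{2}$. Your plan to ``place the stopped martingale in $L^{2}(T)$ with uniformly bounded second moments'' therefore fails at exactly the point you flag as the main obstacle, and this is precisely the difficulty the paper's (and Chen's) argument is built to circumvent: one proves instead that the \emph{weighted} series $\sum U_{n}^{-1}\Delta g_{n}^{P}$ with $U_{n}=e+S_{n\wedge P}^{2}(g)$ converges, by splitting it into a martingale part which is $L^{2}(T)$-bounded thanks to Proposition \ref{L2} (that, not an ``Austin-type control,'' is the role of that proposition) and a predictable part which is absolutely order convergent by an Abel-summation estimate, and then removes the weights because the $U_{n}$ are order bounded. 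Without this weighting device, or an equivalent substitute, your proof does not close.
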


The proof of this theorem will be divided in several steps. Before starting
our proof let us introduce some additional notations. Fix a positive real
$\lambda$ and let $P_{n}$ be the band projection the principal band generated
by%
\[
a_{n}=\left(  f_{n}-\lambda e\right)  ^{+}\vee\left(  S_{n}\left(  f\right)
-\lambda e\right)  ^{+}.
\]
As $a_{n}\in R\left(  T_{n}\right)  $ for every $n,$ the sequence $P=\left(
P_{n}\right)  _{n\geq1}$ is a stopping time. Our first step to proving Theorem
\ref{Main} is the following:

\begin{lemma}
\label{L8}Under the hypothesis of Theorem \ref{Main} and using the notations
above, the sequence $(\Delta g_{n}^{P})_{n\geq1}$ is $T$-bounded in $E,$ that
is, there exists $M\in E^{+}$ such that $\sup\limits_{n}T\left\vert \Delta
g_{n}^{P}\right\vert \leq M.$
\end{lemma}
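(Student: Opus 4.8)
The goal is to control $\sup_n T|\Delta g_n^P|$ by using the hypothesis $S_n(g) \le S_n(f)$ together with the $L^2$-structure and the stopping time $P$ built from the level-$\lambda$ excursions of $f$ and $S(f)$. The natural starting point is the identity \eqref{10}, which tells us that the increment of the stopped quadratic variation equals the square of the stopped increment, $S_n^P(f) - S_n^{P-1}(f) = (\Delta f_n^P)^2$, and similarly for $g$. Summing in $n$ gives $S_n^P(g) = \sum_{k\le n}(\Delta g_k^P)^2$, so controlling the $T$-norm of the increments $\Delta g_n^P$ is really a matter of controlling $T S_n^P(g)$.

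The plan is to exploit the comparison $S_n(g) \le S_n(f)$ at the level of the stopped processes. First I would check that stopping preserves this inequality: because the stopping-time projections $\Delta P_k$ and $P_{n-1}^d$ are disjoint band projections, the stopped quadratic variation is obtained by applying these projections coordinatewise, so $S_n^P(g) \le S_n^P(f)$ follows from $S_n(g)\le S_n(f)$. Next I would bound $T S_n^P(f)$. Here the point of the stopping time is that on the event where we have not yet stopped, both $f$ and $S(f)$ stay below the level $\lambda e$; the stopped process $f^P$ therefore stays controlled, and its final increment can overshoot $\lambda$ only by a jump. Using that $f$ is an $L^2$-martingale, $T S_n(f) = T f_n^2 - T f_1^2$ (the orthogonality of martingale increments under $T$), and an analogous computation for the stopped process, I expect $T S_n^P(f)$ to be bounded in terms of $\lambda$ and $N_1(f) = \sup_n T|f_n|$, which is assumed to lie in $E^u$.

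To pass from the sum $T S_n^P(g) = \sum_{k\le n} T(\Delta g_k^P)^2$ to the first-order quantity $\sup_n T|\Delta g_n^P|$, I would invoke a Cauchy--Schwarz-type inequality for the conditional expectation, of the form $T|\Delta g_n^P| \le (T(\Delta g_n^P)^2)^{1/2}$ (using $Te = e$), or alternatively bound $T|\Delta g_n^P|$ directly by the $T$-norm of the running maximum. The uniform bound $M$ then emerges as (a constant times) $(\lambda\,\text{-dependent bound on } T S^P(f))^{1/2}$, which lies in $E^+$ precisely because $N_1(f)\in E^u$.

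The main obstacle I anticipate is the estimate on $T S_n^P(f)$, i.e.\ showing that stopping at the first passage of $f\vee S(f)$ above level $\lambda$ genuinely tames the quadratic variation. This requires handling the single overshooting increment carefully: before the stopping time the process is bounded by $\lambda e$ by construction, but the increment that crosses the level is only controlled through the martingale property and the assumption $N_1(f)\in E^u$, not pointwise. I would control this terminal jump using Lemma \ref{L1} (the optional-stopping inequality $T(x_{P\wedge n}) \le T(x_n)$ applied to the submartingale $f_n^2$) to keep $T f_{P\wedge n}^2$ from blowing up. Translating the measure-theoretic first-passage intuition into the band-projection language of $P_n$ and verifying that the disjointness of $\Delta P_1,\dots,\Delta P_{n-1},P_{n-1}^d$ lets all these identities go through in $E_s$ is where the real work lies.
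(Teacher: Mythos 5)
Your overall framing is right --- the stopping time is there to keep $f$ and $S(f)$ below the level $\lambda e$ before the stopping band, and the comparison $S_n^P(g)\le S_n^P(f)$ together with $(\Delta g_n^P)^2\le S_n^P(g)$ is indeed the engine of the proof. But the route you propose for the terminal jump does not work, and this is exactly where the difficulty of the lemma sits. You plan to bound $T S_n^P(f)$, hence $T(\Delta g_n^P)^2$, and then recover $T|\Delta g_n^P|$ by a Cauchy--Schwarz inequality. The hypothesis, however, is only $N_1(f)=\sup_n T|f_n|\in E^u$: there is no uniform $L^2$ bound on $f$. The identity $TS_n(f)=Tf_n^2-Tf_1^2$ and the optional-stopping inequality applied to the submartingale $f_n^2$ both produce bounds involving $Tf_n^2$, which is not uniformly controlled by the hypotheses; already in the classical setting, for an $L^1$-bounded martingale the quantity $\mathbb{E}\,S_\tau(f)$ at the first passage time $\tau$ above level $\lambda$ can be infinite, because the single overshooting increment $f_\tau-f_{\tau-1}$ need not be square-integrable even though $f_{\tau-1}$ is bounded by $\lambda$. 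So the quantity you propose to estimate, $TS_n^P(f)$, simply cannot be bounded in terms of $\lambda$ and $N_1(f)$, and the Cauchy--Schwarz step has nothing to feed on.

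The paper's proof avoids this by taking the square root \emph{before} applying $T$. From $(\Delta g_n^P)^2\le S_n^P(f)=S_n^{P-1}(f)+(\Delta f_n^P)^2$ and the pointwise bounds $S_n^{P-1}(f)\le\lambda^2 e$ and $|f_n^{P-1}|\le\lambda e$ coming from the definition of $P$, one obtains the order inequality $(\Delta g_n^P)^2\le\bigl(\lambda e+P_n|\Delta f_n^P|\bigr)^2$, hence $|\Delta g_n^P|\le\lambda e+P_n|\Delta f_n^P|\le 2\lambda e+P_n|f_n^P|$ as elements of $E$. Only now is $T$ applied: the first-order quantity $T|f_n^P|$ is controlled by $N_1(f)$ via Lemma \ref{L1} applied to the submartingale $|f_n|$ --- an $L^1$ optional-stopping estimate, not an $L^2$ one --- and the disjointness of the projections $\Delta P_i$ together with $T$-universal completeness yields the majorant $M=2\lambda e+N_1(f)$ in $E$. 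To repair your argument you would need to replace the passage through $TS_n^P(f)$ and Cauchy--Schwarz by this pointwise square-root step.
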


\begin{proof}
Since $P_{k}\geq P_{\left(  S_{k}\left(  f\right)  -\lambda e\right)  ^{+}}$
we have $P_{k}^{d}=P_{k}^{d}P_{\left(  S_{k}\left(  f\right)  -\lambda
e\right)  ^{+}}^{d},$ whence%
\begin{equation}
P_{k}^{d}S_{k}^{2}\left(  f\right)  =P_{k}^{d}P_{\left(  S_{k}-\lambda
e\right)  ^{+}}^{d}S_{k}^{2}\left(  f\right)  \leq P_{k}^{d}\lambda^{2}e
\label{11}%
\end{equation}
This implies that%
\begin{align*}
(S_{n}^{P-1}(f))^{2}  &  =%
{\textstyle\sum\limits_{k=1}^{n-1}}
P_{i}P_{i-1}^{d}S_{i-1}^{2}(f)+P_{n-1}^{d}S_{n-1}^{2}(f)\\
&  \leq%
{\textstyle\sum\limits_{k=1}^{n-1}}
P_{i}P_{i-1}^{d}\lambda^{2}e+\lambda^{2}P_{n-1}^{d}e,
\end{align*}
and so we get
\begin{equation}
(S_{n}^{P-1}(f))^{2}\leq\lambda^{2}e. \label{12}%
\end{equation}
In a similar manner one can prove%
\[
\left\vert f_{n}^{P-1}\right\vert \leq\lambda e.
\]
Moreover, we have%
\begin{equation}
(\Delta g_{n}^{P})^{2}\leq\left(  S_{n}^{P}g\right)  ^{2}\leq\left(  S_{n}%
^{P}f\right)  ^{2}=P_{n}^{d}(S_{n}(f))^{2}+P_{n}(S_{n}^{P}(f))^{2}. \label{13}%
\end{equation}
Now by (\ref{11}), $P_{n}^{d}(S_{n}(f))^{2}\leq$ $\lambda^{2}P_{n}^{d}e$ and
by combining (\ref{10}) and (\ref{12}) we obtain%
\begin{align*}
P_{n}(S_{n}^{P}(f))^{2}  &  =P_{n}(S_{n}^{P-1}(f)^{2})+P_{n}\left(  \Delta
f_{n}^{P}\right)  ^{2}\\
&  \leq\lambda^{2}P_{n}e+P_{n}\left(  \Delta f_{n}^{P}\right)  ^{2}.
\end{align*}
Using these inequalities, (\ref{13}) yields
\[
(\Delta g_{n}^{P})^{2}\leq\lambda^{2}e+P_{n}\left(  \Delta f_{n}^{P}\right)
^{2}\leq\left(  P_{n}\left\vert \Delta f_{n}^{P}\right\vert +\lambda e\right)
^{2}.
\]
\newline Thus,%
\begin{align*}
\left\vert \Delta g_{n}^{P}\right\vert  &  \leq P_{n}\left\vert \Delta
f_{n}^{P}\right\vert +\lambda e\leq P_{n}\left\vert f_{n}^{P}\right\vert
+P_{n}\left\vert f_{n}^{P-1}\right\vert +\lambda e\\
&  \leq P_{n}\left\vert f_{n}^{P}\right\vert +2\lambda e\leq\sup
\sum\limits_{i=1}^{n}\Delta P_{i}\left\vert f_{i}\right\vert +2\lambda e.
\end{align*}
Observe that $\sup\sum\limits_{i=1}^{n}\Delta P_{i}\left\vert f_{i}\right\vert
$ exists in $E^{u}$ as the sequence $\left(  \Delta P_{i}\left\vert
f_{i}\right\vert \right)  _{i\in\mathbb{N}}$ is disjoint. We need only show
that this supremum belongs to $E,$ or, in other words, that the increasing
sequence $\left(  \sum\limits_{i=1}^{n}\Delta P_{i}\left\vert f_{i}\right\vert
\right)  _{n\geq1}$ converges in order in $E.$ Since the space $E$ is
$T$-universally complete it will be enough to prove that $\sup\limits_{n}%
T\sum\limits_{i=1}^{n}\Delta P_{i}\left\vert f_{i}\right\vert $ exists in
$E^{u}$. To this end applying Lemma \ref{L1} to the submartingale $\left\vert
f_{n}\right\vert $ to obtain,%
\[
\sup\limits_{n}T\sum\limits_{i=1}^{n}\Delta P_{i}\left\vert f_{i}\right\vert
\leq\sup\limits_{n}T\left\vert f_{_{n}}^{P}\right\vert \leq\sup\limits_{n}%
T\left\vert f_{_{n}}\right\vert \leq N_{1}(f).
\]
which proves that the sequence $\left(  \sup\Delta g_{n}^{P}\right)  _{n\geq
1}$ is $T$-bounded in $E$ as required$.$ Moreover we have
\[
T\left\vert \sup\Delta g_{n}^{P}\right\vert \leq2\lambda e+N_{1}\left(
f\right)  .
\]
and the proof is finished.
\end{proof}

Define $U_{0}=e$ and for $n\geq1,$%
\[
U_{n}=e+S_{n\wedge P}^{2}(g),
\]
We know by \cite[Theorem 3.4]{L-381} that $U_{n}$ is invertible in the
$f$-algebra $E^{u}$ and that $U_{n}^{-1}\in E$, as $E$ is an ideal in $E^{u}$
and $U_{n}^{-1}\leq e.$

\begin{lemma}
\label{L3}With the notation as above, the series $%
{\textstyle\sum}
U_{n}^{-1}\Delta g_{n}^{P}$ is order convergent in $E$.
\end{lemma}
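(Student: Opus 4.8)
The plan is to realize the partial sums $M_{n}=\sum_{k=1}^{n}U_{k}^{-1}\Delta g_{k}^{P}$ as an order-Cauchy sequence whose order limit stays inside $E$, and the engine driving the whole argument is Proposition \ref{L2}. First I would record the \emph{square-function estimate}. By \eqref{10} the increments of the stopped quadratic variation are exactly $(\Delta g_{k}^{P})^{2}$, so putting $x_{k}=(\Delta g_{k}^{P})^{2}\in E^{+}$ and $R_{k}=\sum_{i=1}^{k}x_{i}=S_{k\wedge P}(g)$ we have $U_{k}=e+R_{k}$. Proposition \ref{L2} then gives immediately
\[
\sum_{k=1}^{\infty}\left(U_{k}^{-1}\Delta g_{k}^{P}\right)^{2}=\sum_{k=1}^{\infty}x_{k}\left(e+R_{k}\right)^{-2}\le e ,
\]
so the series of squared terms is order convergent and bounded by $e$; in particular its tails are order-null, which is what will control the ``diagonal'' of $T[M_{n}^{2}]$.

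The obstruction is that the weights $U_{k}^{-1}$ are \emph{not predictable}: $U_{k}$ contains the current increment $(\Delta g_{k}^{P})^{2}$, so $M_{n}$ is not itself a martingale and its increments are not $T$-orthogonal. To repair this I would split $U_{k}^{-1}=U_{k-1}^{-1}+(U_{k}^{-1}-U_{k-1}^{-1})$ and use $U_{k}^{-1}-U_{k-1}^{-1}=-U_{k}^{-1}U_{k-1}^{-1}(\Delta g_{k}^{P})^{2}$ to write $M_{n}=N_{n}+A_{n}$, where
\[
N_{n}=\sum_{k=1}^{n}U_{k-1}^{-1}\Delta g_{k}^{P},\qquad A_{n}=-\sum_{k=1}^{n}U_{k}^{-1}U_{k-1}^{-1}\left(\Delta g_{k}^{P}\right)^{3}.
\]
Here $U_{k-1}^{-1}\in R(T_{k-1})$ (since $U_{k-1}=e+\sum_{i\le k-1}(\Delta g_{i}^{P})^{2}$ is $T_{k-1}$-measurable), and $T_{k-1}\Delta g_{k}^{P}=0$ because the stopped process $g^{P}$ is again a martingale. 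Thus $N_{n}$ is a genuine martingale with $T$-orthogonal increments.

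For the martingale part I would prove $L^{2}(T)$-boundedness: orthogonality gives $T[N_{n}^{2}]=\sum_{k\le n}T[U_{k-1}^{-2}(\Delta g_{k}^{P})^{2}]$, and the leading behaviour is governed by the telescoping identity
\[
\sum_{k=1}^{n}U_{k-1}^{-1}U_{k}^{-1}\left(\Delta g_{k}^{P}\right)^{2}=\sum_{k=1}^{n}\left(U_{k-1}^{-1}-U_{k}^{-1}\right)=e-U_{n}^{-1}\le e ,
\]
the remaining discrepancy between $U_{k-1}^{-2}$ and $U_{k-1}^{-1}U_{k}^{-1}$ being absorbed using $(\Delta g_{k}^{P})^{2}\le R_{k}\le U_{k}$, the hypothesis $g\in L^{2}(T)$, and the Riemann-integral representation of Lemma \ref{L9} (in the spirit of the proof of Proposition \ref{L2}). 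Since $E$ is $T$-universally complete, an $L^{2}(T)$-bounded martingale is order convergent with limit in $E$ (one may also localise through a sequence $P_{k}\uparrow I$ and invoke Theorem \ref{ThK}). For the correction $A_{n}$ one must show $\sum_{k}U_{k}^{-1}U_{k-1}^{-1}|\Delta g_{k}^{P}|^{3}$ is order convergent with limit in $E$; this is where the cubic term forces another appeal to the Riemann calculus of \cite{L-311} together with the $T$-bound $\sup_{n}T|\Delta g_{n}^{P}|\le M$ furnished by Lemma \ref{L8}, which keeps the limit inside $E$ rather than merely in $E^{u}$. Combining the two pieces, $M_{n}=N_{n}+A_{n}$ is order convergent in $E$.

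I expect the main obstacle to be precisely this jump-correction $A_{n}$, together with the $L^{2}(T)$-boundedness of $N_{n}$. It is the exact point where the discrete-time failure of predictability of $U_{k}^{-1}$ manifests itself, and where the cubic quantity $(\Delta g_{k}^{P})^{3}$ is \emph{not} tamed by the bare square-function bound: a single large stopped increment already shows that no uniform constant bound on the summands can hold, so one genuinely needs the Riemann-integral machinery and the $L^{2}$/$T$-boundedness inputs rather than a one-line orthogonality argument. The other delicate point, closely tied to this, is ensuring throughout that the order limit lands in $E$ and not only in $E^{u}$, for which the $T$-universal completeness of $E$ and Lemma \ref{L8} are essential.
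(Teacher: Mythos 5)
Your decomposition $U_k^{-1}\Delta g_k^P = U_{k-1}^{-1}\Delta g_k^P - U_{k-1}^{-1}U_k^{-1}(\Delta g_k^P)^3$ is algebraically correct, and the cubic correction is in fact the \emph{unproblematic} half: since $U_{k-1}^{-1}U_k^{-1}(\Delta g_k^P)^2 = U_{k-1}^{-1}-U_k^{-1}$, one has $U_{k-1}^{-1}U_k^{-1}\left\vert \Delta g_k^P\right\vert^3 \le \bigl(U_{k-1}^{-1}-U_k^{-1}\bigr)\sup_j\left\vert \Delta g_j^P\right\vert$, whose partial sums telescope to at most $\sup_j\left\vert \Delta g_j^P\right\vert$, an element of $E$ by the proof of Lemma \ref{L8}; no Riemann-integral machinery is needed there. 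The genuine gap is in your martingale part $N_n=\sum_{k\le n}U_{k-1}^{-1}\Delta g_k^P$. Orthogonality gives $T N_n^2=\sum_{k\le n}T\bigl[U_{k-1}^{-2}(\Delta g_k^P)^2\bigr]$, and Proposition \ref{L2} controls $\sum x_k(e+R_k)^{-2}$, \emph{not} $\sum x_k(e+R_{k-1})^{-2}$. The discrepancy is
\[
U_{k-1}^{-2}(\Delta g_k^P)^2-U_{k-1}^{-1}U_k^{-1}(\Delta g_k^P)^2=\bigl(U_{k-1}^{-1}(\Delta g_k^P)^2\bigr)\bigl(U_{k-1}^{-1}-U_k^{-1}\bigr),
\]
and the only available bound for the factor $U_{k-1}^{-1}(\Delta g_k^P)^2$ is $\bigl(\sup_j\left\vert \Delta g_j^P\right\vert\bigr)^2$. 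Summing therefore yields at best $T N_n^2\le e+T\bigl[(\sup_j\left\vert \Delta g_j^P\right\vert)^2\bigr]$, while Lemma \ref{L8} controls only the \emph{first} moment $T\sup_j\left\vert \Delta g_j^P\right\vert$. Already in the scalar case $\sum a_k/s_{k-1}^2$ can diverge while $\sum a_k/s_k^2$ converges, so this is not a technicality that further estimation will absorb; your closing remark concedes the point but defers it to machinery that the hypotheses do not actually supply.

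The paper resolves this by compensating with conditional expectations rather than by predictabilizing the weight: it keeps $A_i=U_i^{-1}\Delta g_i^P$, so that $\sum T(A_i^2)\le Te=e$ follows \emph{directly} from Proposition \ref{L2}, and writes $\sum_{i\le n} A_i = h_n + \sum_{i\le n} T_{i-1}A_i$ with $h_n=\sum_{i\le n}(I-T_{i-1})A_i$ a martingale satisfying $T(\Delta h_i)^2\le T(A_i^2)$, hence $L^2(T)$-bounded and order convergent. The compensator is then tamed by the identity $T_{i-1}A_i=T_{i-1}\bigl((U_i^{-1}-U_{i-1}^{-1})\Delta g_i^P\bigr)$, valid because $T_{i-1}(U_{i-1}^{-1}\Delta g_i^P)=0$, which brings in the telescoping factor $U_{i-1}^{-1}-U_i^{-1}$ against only the \emph{first} power of $\sup_k\left\vert \Delta g_k^P\right\vert$ --- exactly what Lemma \ref{L8} provides. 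To salvage your route you would need $T\bigl[(\sup_k\left\vert \Delta g_k^P\right\vert)^2\bigr]$ to exist in $E^u$, which is not given; the switch to the paper's compensation is not cosmetic.
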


\begin{proof}
Put $A_{n}=U_{n}^{-1}\Delta g_{n}^{P}$ and $h_{n}=%
{\textstyle\sum\limits_{i=1}^{n}}
\left(  I-T_{i-1}\right)  A_{i}$ and observe that $h_{n}$ belongs to
$R(T_{n})$ and that $T_{n-1}\Delta h_{n}=T_{n-1}\left(  A_{n}-T_{n-1}%
A_{n}\right)  =0.$ This shows that $\left(  h_{n}\right)  _{n\geq1}$ is a
martingale. Since $T_{n-1}$ is an averaging operator we have%
\begin{align*}
T_{n-1}(\Delta h_{n})^{2}  &  =T_{n-1}(A_{n})^{2}+(T_{n-1}A_{n})^{2}%
-2A_{n}T_{n-1}(A_{n})\\
&  =T_{n-1}\left(  A_{n}{}^{2}\right)  -T_{n-1}(A_{n}.T_{n-1}A_{n})\leq
T_{n-1}\left(  A_{n}{}^{2}\right)  .
\end{align*}
It follows that%
\begin{align*}%
{\textstyle\sum\limits_{i=1}^{n}}
T(\Delta h_{n})^{2}  &  =%
{\textstyle\sum\limits_{i=1}^{n}}
TT_{i-1}(\Delta h_{n})^{2}\leq%
{\textstyle\sum\limits_{i=1}^{n}}
TT_{i-1}\left(  A_{n}{}^{2}\right) \\
&  =%
{\textstyle\sum\limits_{i=1}^{n}}
T(A_{n})^{2}.
\end{align*}
Apply Lemma \ref{L2} to the sequence $x_{n}=\left(  \Delta g_{n}^{P}\right)
^{2}$ to prove that $\left(  h_{n}\right)  _{n\geq1}$ is bounded in
$L^{2}\left(  T\right)  $ and then in $E$ (\textrm{\cite[Theorem 3.2]{L-180}%
}).\textrm{ }Using \cite[Theorem 3.5]{L-03} we derive that $\left(
h_{n}\right)  _{n\geq1}$ is order convergent. Now, to prove the lemma it will
be sufficient to show that the series $%
{\textstyle\sum}
T_{i-1}A_{i}$ is order convergent. Since%
\[
T_{i-1}U_{i-1}^{-1}\Delta g_{i}^{P}=U_{i-1}^{-1}T_{i-1}\Delta g_{i}^{P}=0,
\]
we have
\begin{align*}
T%
{\textstyle\sum\limits_{i=1}^{n}}
\left\vert T_{i-1}A_{i}\right\vert  &  =T%
{\textstyle\sum\limits^{n}}
\left\vert T_{i-1}\left(  U_{i}^{-1}-U_{i-1}^{-1}\right)  \Delta g_{i}%
^{P}\right\vert \\
&  \leq T%
{\textstyle\sum\limits_{i=1}^{n}}
T_{i-1}\sup_{k}\left\vert \Delta g_{k}^{P}\right\vert \left(  U_{i-1}%
^{-1}-U_{i}^{-1}\right) \\
&  =T\sup_{k}\left\vert \Delta g_{k}^{P}\right\vert \left(  e-U_{n}%
^{-1}\right)  \leq T\sup_{k}\left\vert \Delta g_{k}^{P}\right\vert .
\end{align*}
By Lemma \ref{L8} the sequence $T%
{\textstyle\sum\limits_{i=1}^{n}}
\left\vert T_{i-1}A_{i}\right\vert $ is bounded in $E^{u}.$ As $E$ is
$T$-universally complete we deduce that the sequence $%
{\textstyle\sum\limits_{i=1}^{n}}
\left\vert T_{i-1}A_{i}\right\vert $ is order convergent in $E,$ which proves
the lemma.\medskip
\end{proof}

\begin{proof}
[Proof of the main Theorem]As $f$ is a strongly bounded martingale in $E^{u}$.
Austin lemma \cite[Section 4]{L-21} yields that $S:=\sup\limits_{n}%
S_{n}\left(  f\right)  $ exists in $E^{u}$. We have already shown that the
series $%
{\textstyle\sum}
\left\vert U_{n}^{-1}\Delta g_{n}^{P}\right\vert $ is order convergent. Now
since the sequence $\left(  U_{n}^{-1}\right)  _{n\geq1}$ is increasing and
bounded it follows that the series $%
{\textstyle\sum}
\Delta g_{n}^{P}$ is order convergent, which implies the convergence of the
sequence $\left(  g_{n}^{P}\right)  _{n\geq1}$. Our purpose now is to prove
the convergence of the sequence $\left(  g_{n}\right)  $. Notice that we have
worked up to now with the fixed stopping time $P=\left(  P_{(\left(
\left\vert f_{n}^{\ast}\right\vert -\lambda e\right)  ^{+}\vee\left(
S_{n}\left(  f\right)  -\lambda e\right)  ^{+}}\right)  _{n\in\mathbb{N}}$ for
fixed $\lambda>0.$ Next, we will consider a sequence of stopping times and it
would be suitable to introduce some additional notations:%
\[
P_{n,k}=P_{(\sup\limits_{i\leq n}\left(  \left\vert f_{i}\right\vert
-ke\right)  ^{+}\vee\left(  S_{n}\left(  f\right)  -ke\right)  ^{+}}%
\qquad\text{ and}\qquad P\left(  k\right)  =\left(  P_{n,k}\right)  _{n\geq
1}.
\]
Theorem \ref{ThK} will be applied to the sequence $\left(  Q_{k}^{d}\right)  $
where $Q_{k}$ is the band projections defined by%
\[
Q_{k}=P_{(f^{\ast}-ke)^{+}\vee\left(  S\left(  f\right)  -ke\right)  ^{+}%
}=\sup\limits_{n}P_{n,k}.
\]
Thanks to Lemma \ref{L5} and Austin's Theorem for vector lattices
\cite[Theorem 4.1]{L-21} we deduce that $S\left(  f\right)  $ and $f^{\ast
}=\sup\left\vert f_{i}\right\vert $ belongs to $E^{u}.$ It is easily seen that
$Q_{k}^{d}g_{n}^{P\left(  k\right)  }=Q_{k}^{d}g_{n}$ for all integers $k$ and
$n.$ Thus, the sequence $\left(  Q_{k}^{d}g_{n}\right)  _{n\geq1}$ is order
convergent for every $k.$ Moreover, it follows from Theorem \ref{L4} that
$Q_{k}^{d}\uparrow I.$ The proof is now completed by invoking Theorem
\ref{ThK}.
\end{proof}

\end{document}